\theoremstyle{thmstyleone}%
\newtheorem{theorem}{Theorem}[section]
\newtheorem{lemma}[theorem]{Lemma}
\newtheorem{proposition}[theorem]{Proposition}
\newtheorem{corollary}[theorem]{Corollary}
\theoremstyle{thmstyletwo}%
\newtheorem{remark}[theorem]{Remark}
\newtheorem{example}[theorem]{Example}
\theoremstyle{thmstylethree}%
\newtheorem{definition}[theorem]{Definition}
\newcommand*{\defeq}{\mathrel{\vcenter{\baselineskip0.5ex \lineskiplimit0pt
                     \hbox{\scriptsize.}\hbox{\scriptsize.}}}%
                     =}
\begin{document}

\author*[1]{\fnm{Haoxu} \sur{Wang}}\email{whx17@mails.tsinghua.edu.cn}

\affil[1]{\orgdiv{Yau Mathematical Sciences Center}, \orgname{Tsinghua University}, \orgaddress{\city{Beijing}, \postcode{100084}, \country{China}}}

\title{Newton Polyhedrons and Hodge Numbers of Non-degenerate Laurent Polynomials}

\abstract{Claude Sabbah has defined the Fourier transform $G$ of the Gauss-Manin system for a non-degenerate and convenient Laurent polynomial and has shown that there exists a polarized mixed Hodge structure on the vanishing cycle of $G$.
In this article, we consider certain non-degenerate and convenient Laurent polynomials $f_{P,\mathbf{a}}$, whose Newton polyhedron at infinity is a simplicial polytope $P$.
First, we consider the stacky fan $\boldsymbol{\Sigma}_P$ given by $P$ and show that for each quotient stacky fan of $\boldsymbol{\Sigma}_P$, there is a natural polarized mixed Hodge structure on the ring of conewise polynomial functions on it.
Then, we describe the polarized mixed Hodge structure on the vanishing cycle associated to $f_{P,\mathbf{a}}$ using these rings of conewise polynomial functions.
In particular, we compute the Hodge diamond of the vanishing cycle. 
As a further consequence, we can solve the Birkhoff problem of such a Laurent polynomial by using elementary methods.}

\keywords{Brieskorn lattice, vanishing cycle, polarized mixed Hodge structure, polytope, stacky fan, conewise polynomial function}

\pacs[MSC Classification]{14F40, 14M25, 32S35, 32S40, 52B20}

\maketitle

\tableofcontents

\section{Introduction}

Let $K$ be a field of characteristic $0$,
let $N \cong \mathbb{Z}^n$ be a free abelian group of rank $n < +\infty$.
Consider $K\left[\mathbf{t}^{ \pm 1}\right] = K[t_1^{ \pm 1}, \cdots, t_n^{ \pm 1}]$, where $t_1, \cdots, t_n$ is a basis of $\operatorname{hom}_{\mathbb{Z}}(N, \mathbb{Z})$.
Let $U \defeq \operatorname{Spec} K[\mathbf{t}^{\pm 1}]$.
Consider a Laurent polynomial 
$$
f
=\sum_{j=1}^{N} a_{j} t_{1}^{w_{1 j}} \cdots t_{n}^{w_{n j}} 
=\sum_{j=1}^{N} a_{j} \mathbf{t}^{w_j}
\in K[\mathbf{t}^{\pm 1}].
$$
Let $P$ be the \emph{Newton polyhedron} of $f$ at $\infty$, that is, the convex hull of the set 
$\left\{0, w_1, \ldots, w_N\right\}$ 
in $\mathbb{Q}^n$, where $w_j = (w_{1 j}, \cdots, w_{n j}) \in \mathbb{Q}^n$.
For any face $F$ of $P$, denote 
$f_F = \sum_{w_j \in F} a_{j} \mathbf{t}^{w_j}$. 
We say $f$ is non-degenerate if for any face $F$ of $P$ not containing $0$, the equations
$$
\frac{\partial f_F}{\partial t_1}=\cdots=\frac{\partial f_F}{\partial t_n}=0
$$
define an empty sub-scheme in $U$. 
We say $f$ is convenient if $0$ lies in the interior of $P$.
If $f$ is non-degenerate and convenient,
we may construct the Fourier transform $G$ of the Gauss-Manin system associated to $f$, its Brieskorn lattice $G_0$,  and its vanishing cycle $H$.
See Section \ref{section:Brieskorn} or \cite{sabbah1999hypergeometric, douai2003gauss}.
By \cite{steenbrink1985variation, saito1989structure, sabbah1997monodromy}, we know that $H$ has a polarized mixed Hodge structure $(H_{\mathbb{Q}}, F^{\bullet}, N, Q)$. 

Let $P$ be a polytope in $N_{\mathbb{Q}} \defeq N \otimes_{\mathbb{Z}} \mathbb{Q}$.
Denote by $P(0)$ the set of vertex of $P$.
Assume that 
\begin{enumerate}[(a)]
\item $P$ is a lattice polytope with respect to $N$, i.e. $P(0) \subset N$,

\item $P$ is a simplicial polytope, i.e. each facet of $P$ contains exactly $n$ vertices,

\item $0$ lies in the interior of $P$.
\end{enumerate}
Consider a Laurent polynomial of the form
$$\begin{aligned}
f = f_{P, \mathbf{a}} \defeq \sum_{v \in P(0)} a_v \mathbf{t}^v \in K[\mathbf{t}^{\pm 1}],
\end{aligned}$$
where $a_v \in K^*$ for all $v \in P(0)$.
We can show that $f_{P, \mathbf{a}}$ is non-degenerate (Lemma \ref{lemma:f_conv_non_dege}).
The aim of this paper is to describe the mixed Hodge structure $(H, F^{\bullet}, N)$ associated to $f_{P, \mathbf{a}}$ using the combinatorial data of $P$ (Corollary \ref{cor:Birkhoff problem}(1)).
In particular, we calculate the Hodge number of $H$ (Corollary \ref{cor:PMHS-van-cyc}).

Many articles (e.g. \cite{tanabe2004combinatorial, harder2021hodge}, etc.) attempt to compute these Hodge numbers (or the spectra) associated to non-degenerate Laurent polynomials.
In particular, in \cite{douai2018global, douai2021hard}, for a lattice simplicial polytope $P$,
Douai shows that for the Laurent polynomial
$$
f_{P, \mathbf{1}} = \sum_{v \in P(0)} \mathbf{t}^v,
$$
we can use the combinatorial data of $P$ to describe the Jacobian ring $J_{f_{P, \mathbf{1}}}$.
More precisely, by \cite{borisov2005orbifold}, we can use $P$ to construct a toric Deligne-Mumford stack $\mathcal{X}(\boldsymbol{\Sigma}_P)$ and the orbifold Chow ring of $\mathcal{X}(\boldsymbol{\Sigma}_P)$ is isomorphic to the graded Jacobian ring $\operatorname{Gr}^{\mathcal{N}} J_{f_{P, \mathbf{1}}}$, where $\mathcal{N}$ is the Newton filtration on $J_{f_{P, \mathbf{1}}}$.
We can decompose the orbifold Chow ring to a direct sum of the Stanley-Reisner ring of some fans which can be easily described by $P$.

In \cite{sabbah2018some}, Sabbah considered the Laurent polynomial $f_{P, \mathbf{1}}$ where $P$ is a smooth Fano polytope.
He shows that we can relate the vanishing cycle $H$ to the Jacobian ring $\operatorname{Gr}^{\mathcal{N}} J_{f_{P, \mathbf{1}}}$ and therefore to the Chow ring of the toric variety defined by $P$.
By this way, he shows that the mixed Hodge structure on $H$ is of Hodge-Tate type, i.e. the Hodge number $h^{p,q} = 0$ for all $p \neq q$.
Using deformation methods, he then shows that this result holds for $f_{P, \mathbf{a}}$ for all $\mathbf{a} \in (K^*)^{P(0)}$.

In this paper, without resorting to toric geometries and deformation methods, we will show that for any lattice simplicial polytope $P$ and for any $\mathbf{a} \in (K^*)^{P(0)}$, we can decompose the vanishing cycle and the graded Jacobian ring to a direct sum of some sub-spaces respectively, see (\ref{align:decomposition-H}) and (\ref{align:decomposition-jac}). 
Each of them is isomorphic to the Stanley-Reisner ring of a fan, see Corollary \ref{lemma:van-jac-rel-u}.
As another corollary, we can solve the Birkhoff problem by elementary methods (See Corollary \ref{cor:Birkhoff problem} (2)).

\section{Polytopes and stacky fans}
\label{section:Polytopes}

\subsection{Stacky fans}

Let $N$ be a finitely generated abelian group.
We will consider polytopes, cones and fans etc. in $N_{\mathbb{Q}}$.
Denote by $\overline{N}$ the image of $N$ in $N_{\mathbb{Q}}$.
Hence $\overline{N} \cong \mathbb{Z}^n$ for some $n$.
Similarly, for any $u \in N$, denote by $\overline{u}$ the image of $u$ in $N_{\mathbb{Q}}$.
Unless otherwise stated, we assume that $N \xrightarrow{\sim} \overline{N}$.

We denote by $P(k)$ the set of all $k$-dimensional faces of a polyhedra $P$, i.e. an intersection of a finite number of affine half spaces in $N_{\mathbb{Q}}$.
Similarly, we denote by $\Sigma(k)$ the set of all $k$-dimensional cones in a fan $\Sigma$.

\begin{definition}[\cite{borisov2005orbifold}]
\label{def:stacky_fan}
A stacky fan $\boldsymbol{\Sigma} = (N, \Sigma, \{v_{\rho}\}_{\rho \in \Sigma(1)})$ is a triple consisting of 
a finitely generated abelian group $N$, 
a simplicial fan $\Sigma$ in $\mathbb{Q} \otimes_{\mathbb{Z}} N$,
and $v_{\rho} \in N$ for each ray $\rho \in \Sigma(1)$ such that $\overline{v}_{\rho}$ is a generator of $\rho$.
\end{definition}

\begin{example}
\label{exm:stacky_fan}
\begin{enumerate}[(i)]
\item 
A simplicial fan $\Sigma$ in $N_{\mathbb{Q}}$ determines a stacky fan $(\overline{N}, \Sigma, \{v_{\rho}\}_{\rho \in \Sigma(1)})$ where $v_{\rho}$ is the minimal lattice points on the rays. 

\item
\label{item:exm_stacky_fan_poly}
Let $P$ be a lattice simplicial polytope containing the origin as an interior point.
Then $P$ determines a stacky fan $\boldsymbol{\Sigma}_P = (N, \Sigma_P, P(0))$, where the cones in $\Sigma_P$ are the cones over proper faces of $P$.
\end{enumerate}
\end{example}

Let $\boldsymbol{\Sigma}$ be a stacky fan.
Notice that
\begin{align}\label{align:decomp-cone}
|\Sigma| = \bigcup_{\sigma \in \Sigma} \sigma = \bigsqcup_{\sigma \in \Sigma} 
\sigma^{\circ}
\end{align}
where for each $\sigma \in \Sigma$,
$$
\sigma^{\circ} \defeq \left\{\sum_{\rho \in \sigma(1)} \lambda_{\rho} \overline{v}_{\rho} \middle|  \lambda_{\rho} > 0\right\}
$$
is the relative interior of $\sigma$.
For any $u \in |\Sigma|$, denote by $\sigma(u)$ the unique cone in $\Sigma$ such that $u \in \sigma(u)^{\circ}$.

For each cone $\sigma \in \Sigma$, denote
$$\begin{aligned}
\operatorname{Box}(\sigma) &\defeq \left\{u \in N \middle| \overline{u}=\sum_{\rho \in \sigma(1)} \lambda_{\rho} \overline{v}_{\rho} \text{ for some } 0 \leq \lambda_{\rho}<1\right\},\\
P(\sigma) &\defeq \left\{u \in N \middle| u = \sum_{\rho \in \sigma(1)} \lambda_{\rho} v_{\rho} \text{ for some } \lambda_{\rho} \in \mathbb{Z}_{\geq 0}\right\}.
\end{aligned}$$
Denote 
\begin{align}\label{align:def-box-P}
\operatorname{Box}(\boldsymbol{\Sigma}) \defeq \bigcup_{\sigma \in \Sigma} \operatorname{Box}(\sigma) \quad \text{ and } \quad P(\boldsymbol{\Sigma}) \defeq \bigcup_{\sigma \in \Sigma} P(\sigma). 
\end{align}
Then for any $u \in N$, there exists a unique element $\{u\} \in \operatorname{Box}(\boldsymbol{\Sigma})$, and a unique element $\lfloor u \rfloor \in P(\boldsymbol{\Sigma})$, such that $u = \{u\} + \lfloor u \rfloor$.

For any $u \in \operatorname{Box}(\boldsymbol{\Sigma})$, denote
\begin{align}\label{align:def-P_u}
P_u(\boldsymbol{\Sigma}) 
\defeq \left\{w \in N \middle| \{w\} = u\right\}.
\end{align}
Then 
\begin{align}\label{align:decomp-P_u}
N = \bigsqcup_{u \in \operatorname{Box}(\boldsymbol{\Sigma})} P_u(\boldsymbol{\Sigma}).
\end{align}

\subsection{Conewise polynomial functions}

\begin{definition}[\cite{barthel2002combinatorial, braden2006remarks, fleming2010hard}]
\label{def:conewise_polynomial}
Let $K$ be a field of characteristic $0$.
Let $K\left[\mathbf{t}\right] = K[t_1, \cdots, t_n]$ be the ring of $K$-valued polynomial functions on $N_{\mathbb{Q}}$,
where $\{t_1, \cdots, t_n\} \subset \hom_{\mathbb{Z}}(N, \mathbb{Z})$ is a basis.
Let $\mathbf{m} = (t_1, \cdots, t_n) \subset K\left[\mathbf{t}\right]$. 
Suppose that $\Sigma$ is a simplicial fan. 
\begin{enumerate}[(a)]
\item 
Let $\mathcal{A}(\Sigma) = \mathcal{A}_K(\Sigma)$ be the graded $K\left[\mathbf{t}\right]$-algebra of all conewise polynomial functions on $\Sigma$, i.e. $K$-valued functions on $|\Sigma|$ which restrict to polynomials on cones of $\Sigma$. 
The grading on $\mathcal{A}(\Sigma)$ is given by degree.
More precisely, $f \in \mathcal{A}^k(\Sigma)$ if and only if $f|_{\sigma}$ is a polynomial of degree $k$ for each $\sigma \in \Sigma$.

\item
Define $H(\Sigma) = H_K(\Sigma) \defeq \mathcal{A}(\Sigma) / \mathbf{m} \mathcal{A}(\Sigma)$.

\item
We note that $l \in \mathcal{A}^1_{\mathbb{Q}}(\Sigma)$ is strictly convex if and only if $l+f$ is strictly convex for each $f \in \operatorname{hom}_{\mathbb{Q}}(N_{\mathbb{Q}}, \mathbb{Q}) = \mathfrak{m}_1 = \sum \mathbb{Q} t_i$. So it makes sense to say whether a class in $H_{\mathbb{Q}}^1(\Sigma)$ is strictly convex or not.
\end{enumerate}
\end{definition}

\begin{remark}
Let $\boldsymbol{\Sigma} = (N, \Sigma, \{v_{\rho}\}_{\rho \in \Sigma(1)})$ be a stacky fan.
The Stanley-Reisner ring of $\Sigma$ is defined to be 
$$
\operatorname{SR}[\Sigma] \defeq K\left[x_{\rho}\right]_{\rho \in \Sigma(1)}/\left(x_{\rho_1} \ldots x_{\rho_r}\middle|
\text{$\rho_1, \ldots, \rho_r$ do not generate a cone in $\Sigma$}\right).
$$
Then we have an isomorphism
$$\begin{aligned}
\operatorname{SR}[\Sigma] &\xrightarrow{\sim} \mathcal{A}(\Sigma),\\
x_{\rho} &\mapsto \chi_{\rho},
\end{aligned}$$
where $\chi_{\rho} \in \mathcal{A}^1(\Sigma)$ is the unique conewise linear function such that 
$$
\chi_{\rho}(v_{\rho^{\prime}}) = \begin{cases}
1, & \rho^{\prime} = \rho,\\
0, & \rho^{\prime} \neq \rho.\\
\end{cases}
$$
For details, see \cite[Theorem 4.2]{billera1992modules}.
\end{remark}

\begin{definition}\label{def:h-vector}
The $f$-vector of a fan $\Sigma$ is the sequence $\left(f_{-1}, f_0, \ldots, f_{n-1}\right)$ where $f_i = |\Sigma(i+1)|$. The $f$-polynomial is
$$
f(t) \defeq f_{-1} t^n+f_0 t^{n-1}+\cdots+f_{n-2} t+f_{n-1} .
$$
The $h$-polynomial is the polynomial given by
$$
h(t)=f(t-1) .
$$
The $h$-vector is the sequence $\left(h_0, h_1, \ldots, h_n\right)$ of coefficients of $h(t)$ :
$$
h(t)=h_0 t^n+h_1 t^{n-1}+\cdots+h_{n-1} t+h_n.
$$
\end{definition}

Let $\Sigma$ be a simplicial fan.
By \cite[Corallory 4.10.]{billera1989algebra},
$\mathcal{A}(\Sigma)$ is a free $K\left[\mathbf{t}\right]$-module 
and a basis for $\mathcal{A}(\Sigma)$ contains $h_i$ elements of degree $i$.
As $H(\Sigma) = \mathcal{A}(\Sigma) \otimes_{K[\mathbf{t}]} K[\mathbf{t}]/\mathbf{m}$, we have $\dim H^i(\Sigma) = h_i$.
In particular, we know that $H^i(\Sigma) = 0$, for any $i > n$, and $\dim H^n(\Sigma) = 1$ if $\Sigma$ is complete. (See e.g. \cite[Theorem 12.5.9]{cox2011toric}.)
In fact, we have a so-called ``evaluation map'' $\langle\cdot\rangle: H^n(\Sigma) \xrightarrow{\sim} K$. (For specific definition, see \cite[Theorem 2.2]{brion1997structure}, also \cite[Section 2.3]{fleming2010hard}.)
We will also use $\langle\cdot\rangle$ to denote the composition of the projection map $H(\Sigma) \to H^n(\Sigma)$ and the evaluation map.

\begin{theorem}
\label{thm:PMHS-SR-ring}
Let $l$ be a strictly convex conewise linear function on a complete simplicial fan $\Sigma$. 
Consider
\begin{itemize}
\item an increasing filtration $W_{\bullet}$ on $H(\Sigma)$ given by $W_{2k} = W_{2k+1} \defeq \bigoplus_{i \leq k} H^{n-i}(\Sigma)$,
\item a decreasing filtration $F^{\bullet}$ on $H(\Sigma)$ given by $F^k \defeq \bigoplus_{i \geq k} H^{n-i}(\Sigma)$,
\item the linear transformation on $H(\Sigma)$ given by the multiplication by $l$,
\item a bilinear form $Q = Q_{\Sigma}$ on $H(\Sigma)$ such that $Q(h_1, h_2) \defeq (-1)^{k_1}\left\langle h_1 \cdot h_2 \right\rangle$, for any $h_i \in H^{k_i}(\Sigma)$.
\end{itemize}
Then the tuple $\left(H_{\mathbb{Q}}(\Sigma), W_{\bullet}, F^{\bullet}, l, Q\right)$ is a polarized mixed Hodge structure of Hodge-Tate type and with weight $n$.
(For the definition of polarized mixed Hodge structures, see e.g. \cite[Definition 10.16.]{hertling2002frobenius}.
For the definition of Hodge-Tate type, see e.g. \cite[p.5]{sabbah2018some}.
)
\end{theorem}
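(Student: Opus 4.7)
The plan is to verify the axioms of a polarized mixed Hodge structure (see e.g.\ \cite[Definition 10.16]{hertling2002frobenius}) one at a time, isolating the only deep input: the combinatorial hard Lefschetz theorem and Hodge--Riemann bilinear relations for $H(\Sigma)$ with respect to a strictly convex conewise linear class.

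\emph{Formal compatibilities.} Since $H^j(\Sigma) = 0$ for $j > n$, multiplication by $l$ is nilpotent of order at most $n+1$. Because $\deg l = 1$, we have $l \cdot H^{n-i}(\Sigma) \subseteq H^{n-(i-1)}(\Sigma)$, which immediately gives $l(W_{2k}) \subseteq W_{2(k-1)}$ and $l(F^k) \subseteq F^{k-1}$. A short sign computation using commutativity of $H(\Sigma)$ yields $Q(lx,y)+Q(x,ly)=0$. The compatibility $Q(F^p,F^{n-p+1})=0$ follows by degree counting: elements of $F^p$ have cohomological degree $\leq n-p$ and those of $F^{n-p+1}$ have degree $\leq p-1$, so their products lie in degree $\leq n-1$, where $\langle\cdot\rangle$ vanishes.

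\emph{Graded pieces.} A direct computation of the induced filtrations identifies $\operatorname{Gr}^W_{2k} \cong H^{n-k}(\Sigma)$ (with odd pieces zero) and shows that the induced Hodge filtration satisfies $F^p \operatorname{Gr}^W_{2k} = H^{n-k}$ for $p \leq k$ and zero otherwise. Hence every class in $\operatorname{Gr}^W_{2k}$ has Hodge bidegree $(k,k)$, so $\operatorname{Gr}^W_{2k}$ is a pure Hodge--Tate structure of weight $2k$, and the resulting MHS on $H(\Sigma)$ is of Hodge--Tate type centered at weight $n$.

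\emph{Weight filtration and polarization.} It remains to show that $W_\bullet$ is the monodromy weight filtration of $l$ centered at $n$ (equivalently, $l^{2k-n}: \operatorname{Gr}^W_{2k}\xrightarrow{\sim}\operatorname{Gr}^W_{2(n-k)}$ for $k \geq n/2$) and that the bilinear forms induced by $Q$ on the primitive pieces are polarizations. Under the identification of the previous step these become, respectively, the hard Lefschetz theorem ($l^{n-2m}: H^m(\Sigma) \cong H^{n-m}(\Sigma)$ for $m \leq n/2$) and the Hodge--Riemann bilinear relations for the ring $H(\Sigma)$ with the strictly convex conewise linear class $l$. Both statements are established for arbitrary complete simplicial fans in \cite{barthel2002combinatorial, braden2006remarks, fleming2010hard} (building on Karu's treatment of nonrational polytopes). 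Given them, the Lefschetz decomposition together with the sign convention built into $Q$ yields the required polarization on each primitive piece. The main obstacle is thus not logical --- everything else is bookkeeping dictated by the grading on $H(\Sigma)$ and the chosen filtrations --- but lies entirely in correctly importing and orchestrating these combinatorial hard Lefschetz and Hodge--Riemann results.
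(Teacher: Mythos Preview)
Your proposal is correct and follows essentially the same approach as the paper: both reduce the verification to formal degree/sign bookkeeping plus the two deep inputs, namely the combinatorial hard Lefschetz theorem and the Hodge--Riemann bilinear relations for $H(\Sigma)$ with respect to a strictly convex class (the paper cites \cite{mcmullen1993simple} and \cite{fleming2010hard}, you cite \cite{barthel2002combinatorial, braden2006remarks, fleming2010hard}, which is the same body of results). The only item you leave implicit that the paper spells out is the $(-1)^n$-symmetry of $Q$, but this is indeed trivial bookkeeping.
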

\begin{proof}
\begin{enumerate}[(i)]
\item 
Since $W_{2k} = W_{2k+1}$ and $H(\Sigma) = F^{k+1} \oplus W_{2k}$ for all $k$, we know that $\left(H(\Sigma), W_{\bullet}, F^{\bullet}\right)$ forms mixed Hodge structure of Hodge-Tate type.

\item
\begin{enumerate}
\item 
Since $l \in H^{1}(\Sigma)$, we know that $l(H^i) \subset H^{i+1}$, i.e. $l$ is a map of degree $(-1,-1)$ of $\left(H(\Sigma), W_{\bullet}, F^{\bullet}\right)$.

\item 
Since $l^{n+1} \in H^{n+1}(\Sigma) = 0$, we know that $l$ is nilpotent.

\item
By \cite[Theorem 7.3.]{mcmullen1993simple} or \cite[Theorem 1.1.]{fleming2010hard}, multiplication by
$$
l^{n-2 k}: 
\operatorname{Gr}^W_{n + (n-2k)} = 
H^k(\Sigma) \rightarrow 
\operatorname{Gr}^W_{n - (n-2k)} = 
H^{n-k}(\Sigma)
$$
is an isomorphism for each $k$.
Therefore, $W_{\bullet} = M(l)_{\bullet-n}$, where $M(l)$ is the monodromy filtration of $l$.
\end{enumerate}

\item
\begin{enumerate}
\item 
Note that for $h_i \in H^{k_i}(\Sigma)$, $Q(h_1, h_2) \neq 0$ only if $k_1 + k_2 = n$, i.e. 
$$Q\left(F^k, F^{n-k+1}\right)=0.$$

\item
Furthermore, when $k_1 + k_2 = n$, we have 
$$
Q(h_1, h_2) = (-1)^{k_1}\left\langle h_1 \cdot h_2 \right\rangle = (-1)^{n} (-1)^{k_2}\left\langle h_1 \cdot h_2 \right\rangle = (-1)^{n} Q(h_2, h_1).
$$
Therefore $Q$ is $(-1)^n$-symmetric.

\item
For $h_i \in H^{k_i}(\Sigma)$, $Q(l h_1, h_2) + Q(h_1, l h_2) = \left((-1)^{k_1} + (-1)^{k_1+1}\right)\left\langle l \cdot h_1 \cdot h_2 \right\rangle = 0$.

\item
Note that
$$
PH_{n + \ell} (\Sigma) = \begin{cases}
\operatorname{ker}\left(l^{n-2 k+1}: H^k(\Sigma) \rightarrow H^{n-k+1}(\Sigma)\right), & \ell = n - 2k,\\
0, & \ell = n - 2k -1.
\end{cases}
$$
Set $\ell = n - 2k$.
The pure Hodge structure on $PH_{n + \ell} (\Sigma)$ is given by
$H^{n-k,n-k} = PH_{n + \ell} (\Sigma)$.

By \cite[Theorem 8.2.]{mcmullen1993simple} or \cite[Theorem 1.2.]{fleming2010hard}, 
the quadratic form 
$$h \mapsto (-1)^k \left\langle l^{\ell} \cdot h \cdot h \right\rangle$$
is positive definite on $PH_{n + \ell}$.
Therefore, we know that 
$$
i^{2 p-n-\ell} Q(h, l^{\ell}\bar{h})>0
$$ 
if $h \in F^p PH_{n + \ell} (\Sigma) \cap \overline{F^{n+\ell-p} PH_{n + \ell} (\Sigma)}$, $h \neq 0$.
\end{enumerate}
\end{enumerate}
\end{proof}

\subsection{Quotient stacky fans}

\begin{definition}
\label{def:quotient-stacky-fan}
Let $\boldsymbol{\Sigma} = (N, \Sigma, \{v_{\rho}\}_{\rho \in \Sigma(1)})$ be a stacky fan.
Fix a cone $\sigma$ in the fan $\Sigma$. 
\begin{enumerate}[(a)]
\item
We define
$$
\begin{aligned}
& \operatorname{Star}_{\Sigma}(\sigma)=\{\delta \in \Sigma \mid \sigma \prec \delta\}, \\
& \overline{\operatorname{Star}}_{\Sigma}(\sigma)=\left\{\tau \in \Sigma \mid \tau \prec \delta \text { for some } \delta \in \operatorname{Star}(\sigma)\right\}, \\
& \operatorname{Link}_{\Sigma}(\sigma)=\left\{\tau \in \overline{\operatorname{Star}}(\sigma) \mid \tau \cap \sigma=0\right\} .
\end{aligned}
$$
And 
$$\begin{aligned}
\overline{\boldsymbol{\operatorname{Star}}}_{\boldsymbol{\Sigma}}(\sigma) &= \left(N, \overline{\operatorname{Star}}_{\Sigma}(\sigma), \{v_{\rho}\}_{\rho \in \overline{\operatorname{Star}}_{\Sigma}(\sigma)(1)}\right),\\
\boldsymbol{\operatorname{Link}}_{\boldsymbol{\Sigma}}(\sigma) &= \left(N, \operatorname{Link}_{\Sigma}(\sigma), \{v_{\rho}\}_{\rho \in \operatorname{Link}(\sigma)(1)}\right).
\end{aligned}$$

\item 
Let $N_\sigma$ be the subgroup of $N$ generated by the set $\left\{v_{\rho} | \rho \in \sigma(1)\right\}$ and let $N(\sigma)$ be the quotient group $N / N_\sigma$. 

\item
The quotient fan $\Sigma(\sigma)$ in $N(\sigma)_{\mathbb{Q}}$ is the set 
$$
\Sigma(\sigma) 
\defeq \left\{\tau+\left(N_\sigma\right)_{\mathbb{Q}} \subset N(\sigma)_{\mathbb{Q}} \middle| \tau \in \operatorname{Star}(\sigma)\right\}.
$$

\item 
The quotient stacky fan $\boldsymbol{\Sigma}(\sigma)$ is the triple $\left(N(\sigma), \Sigma( \sigma), \left\{v_{\rho} + N_{\sigma}\right\}_{\rho \in \operatorname{Link}(\sigma)(1)}\right)$.
\end{enumerate}
\end{definition}

Note that we have the following maps of stacky fans 
$$\xymatrix{
\overline{\boldsymbol{\operatorname{Star}}}_{\boldsymbol{\Sigma}}(\sigma) \ar[d]_{\pi} \ar@{^(_->}[r]^i & \boldsymbol{\Sigma}\\
\boldsymbol{\Sigma}(\sigma)
}$$
Hence we have maps $\mathcal{A}(\boldsymbol{\Sigma}) \xrightarrow{i^*} \mathcal{A}(\overline{\boldsymbol{\operatorname{Star}}}_{\boldsymbol{\Sigma}}(\sigma)) \xleftarrow{\pi^*} \mathcal{A}(\boldsymbol{\Sigma}(\sigma))$ and $H(\boldsymbol{\Sigma}) \xrightarrow{i^*} H(\overline{\boldsymbol{\operatorname{Star}}}_{\boldsymbol{\Sigma}}(\sigma)) \xleftarrow{\pi^*} H(\boldsymbol{\Sigma}(\sigma))$.
In fact, 
$\pi^* : H(\boldsymbol{\Sigma}(\sigma)) \rightarrow H(\overline{\boldsymbol{\operatorname{Star}}}_{\boldsymbol{\Sigma}}(\sigma))$ is an isomorphism. 
Moreover, a conewise linear function $l \in H^1(\boldsymbol{\Sigma}(\sigma))$ is strictly convex if and only if $\pi^*(l) \in H^1(\overline{\boldsymbol{\operatorname{Star}}}_{\boldsymbol{\Sigma}}(\sigma))$ is strictly convex.
(See \cite[Section 1.2]{gross2011tropical}.)
As a consequence, we have
\begin{corollary}
\label{lemma:star-quotient-isom}
Let $l$ be a strictly convex conewise linear function on a complete simplicial fan $\Sigma$. 
For any $\sigma \in \Sigma$,
$\left(H_{\mathbb{Q}}(\overline{\operatorname{Star}}(\sigma)), W_{\bullet}, F^{\bullet}, l, Q_{\Sigma(\sigma)}\right)$ is a polarized mixed Hodge structure of Hodge-Tate type with weight $\operatorname{codim} \sigma \defeq n - \dim \sigma$.
\end{corollary}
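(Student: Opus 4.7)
The plan is to deduce this corollary from Theorem~\ref{thm:PMHS-SR-ring} applied to the quotient stacky fan $\boldsymbol{\Sigma}(\sigma)$, and then to transport the resulting structure along the isomorphism $\pi^{*}$ already described in the paragraph preceding the corollary.

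First I would record two combinatorial facts. The quotient fan $\Sigma(\sigma)$ is a complete simplicial fan in $N(\sigma)_{\mathbb{Q}}$ of dimension $\operatorname{codim}\sigma = n - \dim\sigma$: simpliciality is inherited from $\Sigma$, and completeness follows because the cones of $\Sigma$ containing $\sigma$ already cover $N_{\mathbb{Q}}$ modulo $(N_\sigma)_{\mathbb{Q}}$. Moreover, strict convexity of $l$ on $\Sigma$ implies strict convexity of its restriction $i^{*}l$ on $\overline{\operatorname{Star}}_{\Sigma}(\sigma)$, since strict convexity is tested locally across codimension-one walls and the star uses only a subcollection of such walls. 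Combined with the fact (cited from \cite{gross2011tropical}) that $\pi^{*}$ is a graded algebra isomorphism under which strict convexity corresponds, there is a unique strictly convex $l' \in H^{1}(\boldsymbol{\Sigma}(\sigma))$ with $\pi^{*}l' = i^{*}l$.

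Next I would apply Theorem~\ref{thm:PMHS-SR-ring} to the complete simplicial fan $\Sigma(\sigma)$ of dimension $\operatorname{codim}\sigma$ together with the strictly convex class $l'$. This yields a polarized mixed Hodge structure $(H_{\mathbb{Q}}(\boldsymbol{\Sigma}(\sigma)), W_{\bullet}, F^{\bullet}, l', Q_{\Sigma(\sigma)})$ of Hodge-Tate type and with weight $\operatorname{codim}\sigma$, where $W_{\bullet}$ and $F^{\bullet}$ are built from the degree grading on $H(\boldsymbol{\Sigma}(\sigma))$ as in the theorem.

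Finally I would transport the structure along $\pi^{*}$. Because $\pi^{*}$ is a graded algebra isomorphism, it carries $W_{\bullet}$ and $F^{\bullet}$ on the source to the analogous filtrations on $H_{\mathbb{Q}}(\overline{\operatorname{Star}}(\sigma))$, intertwines multiplication by $l'$ with multiplication by $l$, and transports $Q_{\Sigma(\sigma)}$ unchanged (it is by design defined on the quotient side). The outcome is precisely the PMHS asserted in the corollary. There is no serious obstacle here: once the isomorphism $\pi^{*}$ and the preservation of strict convexity (both inputs from tropical/toric geometry) are granted, the corollary is a formal consequence of Theorem~\ref{thm:PMHS-SR-ring}.
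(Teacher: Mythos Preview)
Your proposal is correct and follows essentially the same approach as the paper: the paper simply writes ``As a consequence, we have'' immediately after recording that $\pi^{*}\colon H(\boldsymbol{\Sigma}(\sigma))\to H(\overline{\operatorname{Star}}_{\boldsymbol{\Sigma}}(\sigma))$ is a graded isomorphism preserving strict convexity, leaving the reduction to Theorem~\ref{thm:PMHS-SR-ring} implicit. Your write-up spells out exactly that reduction, including the (easy) verification that $\Sigma(\sigma)$ is complete simplicial of dimension $\operatorname{codim}\sigma$ and that the restriction $i^{*}l$ remains strictly convex on the star.
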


\section{Gauss-Manin system and Brieskorn lattice}
\label{section:Brieskorn}

\subsection{Twisted algebraic de Rham complex}

Let $f$ be a non-degenerate Laurent polynomial in $K\left[\mathbf{t}^{ \pm 1}\right]$ such that $P$ is the Newton polyhedron of $f$ at $\infty$,
let $\theta$ be a new variable, and let $\tau = \theta^{-1}$. 
The twisted algebraic de Rham complex attached to $f$ is the complex of $K\left[\tau^{\pm 1}\right]$-modules
$$
\left(\Omega^{\bullet}(U)\left[\tau^{\pm 1}\right], \mathrm{e}^{\tau f} \circ \mathrm{d} \circ \mathrm{e}^{- \tau f}\right),
$$
where
$$
\mathrm{e}^{\tau f} \circ \mathrm{d} \circ \mathrm{e}^{- \tau f} = \mathrm{d} - \tau \mathrm{d} f \wedge.
$$
We define $\Omega(f)$ to be the complex
$$
\Omega(f) \defeq \left(\Omega^{\bullet}(U)\left[\tau^{\pm 1}\right], \theta \mathrm{d}-\mathrm{d} f \wedge\right).
$$
Define a connection $\nabla$ on $\Omega(f)$ by
\begin{align}\label{align:partial_theta}
\nabla_{\partial_{\tau}} = \mathrm{e}^{\tau f} \circ \partial_{\tau} \circ \mathrm{e}^{- \tau f} = \frac{\partial}{\partial \tau} - f.
\end{align}
Consider the following complex of $K[\theta]$-modules:
$$
\Omega_0(f) \defeq \left(\Omega^{\bullet}(U)\left[\theta\right], \theta \mathrm{d}-\mathrm{d} f \wedge\right).
$$
Then $\Omega_0(f) \otimes_{K[\theta]} K\left[\tau^{\pm 1}\right] \xrightarrow{\sim} \Omega(f)$,
and $\Omega_0(f) \otimes_{K[\theta]} \left(K\left[\theta\right]/\theta K\left[\theta\right]\right)$ isomorphic to the Koszul complex 
$$
K(f) \defeq \left(\Omega^{\bullet}(U), - \mathrm{d} f \wedge\right).
$$
Endow $\Omega(f)$ with the increasing filtration $\Phi_{\bullet}$ by $\Phi_p\Omega(f) \defeq \theta^{-p} \Omega_0(f)$. 
We have
$$
\operatorname{Gr}^{\Phi}_p \Omega(f) \cong \left.\theta^{-p}\Omega_0(f)\middle/\theta^{-p+1} \Omega_0(f) \cong K(f).\right.
$$
for all $p$.
The Fourier transform of the Gauss-Manin system is defined to be
$$
G \defeq H^n\left(\Omega(f)\right) = \left.\Omega^n(U)\left[\tau^{\pm 1}\right] \middle/(\mathrm{d}-\tau \mathrm{d} f \wedge) \Omega^{n-1}(U)\left[\tau^{\pm 1}\right] .\right.
$$
The operator $\nabla_{\partial_{\tau}}$ acts on $G$.
The Brieskorn lattice is defined to be
$$
G_0 \defeq H^n\left(\Omega_0(f)\right) = \left.\Omega^n(U)[\theta]\middle/(\theta \mathrm{d}-\mathrm{d} f \wedge) \Omega^{n-1}(U)[\theta].\right.
$$
The Jacobian ring is defined to be
\begin{align}
\label{align:def-Jaco}
J_f \defeq \left.K\left[\mathbf{t}^{\pm 1}\right]\middle/\left(t_1 \frac{\partial f}{\partial t_1}, \cdots, t_n \frac{\partial f}{\partial t_n}\right)\right..
\end{align}
We have $J_f \cong H^n\left(K(f)\right)$.
Consider the following commutative diagram:
\begin{align}
\label{align:three-complexes}
\xymatrix{
\cdots \ar[r] & \Omega^{n-1}(U) \ar[r]^{-\mathrm{d} f \wedge } & \Omega^n(U) \ar[r]^{\overline{\epsilon}} & J_f \ar[r] & 0\\
\cdots \ar[r] & \Omega^{n-1}(U)\left[\theta\right] \ar[r]^{\theta \mathrm{d}-\mathrm{d} f \wedge } \ar[d] \ar[u] & \Omega^n(U)\left[\theta\right] \ar[r]^{\epsilon_0} \ar[d]^{i} \ar[u]_{p} & G_0 \ar[r] \ar[d]^{\tilde{i}} \ar[u]_{\tilde{p}} & 0\\
\cdots \ar[r] & \Omega^{n-1}(U)\left[\tau^{ \pm 1}\right] \ar[r]^{\theta \mathrm{d}-\mathrm{d} f \wedge } & \Omega^n(U)\left[\tau^{ \pm 1}\right] \ar[r]^{\epsilon} & G \ar[r] & 0
}\end{align}
where $p, i, \tilde{p}, \tilde{i}, \overline{\epsilon}, \epsilon_0$ and $\epsilon$ are canonical morphisms.
The three horizontal lines in the above commutative diagram are three complexes, which we will call $\tilde{K}(f)$, $\tilde{\Omega}_0(f)$, $\tilde{\Omega}(f)$ from top to bottom.

\subsection{The Newton filtration}
\label{subsection:The_Newton_filtration}

We define the Newton filtration $\mathcal{N}_\alpha$ ($\alpha \in \mathbb{Q}$) on $K\left[\mathbf{t}^{ \pm 1}\right]$ by
$$
\mathcal{N}_\alpha K\left[\mathbf{t}^{ \pm 1}\right]
= \operatorname{span}\left\{\mathbf{t}^u | \deg(u) \leq \alpha\right\},
$$
where 
\begin{align}\label{def_deg}
\deg(u) = \deg_P(u) \defeq \min\{\lambda | u \in \lambda P\}
\end{align}
is the strictly convex conewise linear function associated to $P$.
Later we will view $\deg$ as an element in $\mathcal{A}^1(\Sigma_P)$.

The Newton filtration on $\Omega^k(U)$ is defined by
$$
\mathcal{N}_\alpha \Omega^k(U) \defeq \sum_{i_1 < \cdots < i_k} \mathcal{N}_{\alpha + k - n} K\left[\mathbf{t}^{\pm 1}\right] \cdot \frac{\mathrm{d} t_{i_1}}{t_{i_1}} \wedge \cdots \wedge \frac{\mathrm{d} t_{i_k}}{t_{i_k}}.
$$
Extend it to $\Omega^k(U)\left[\tau^{\pm 1}\right]$ by
\begin{align}
\label{align:newton-fil-diff-tau}    
\mathcal{N}_\alpha \Omega^k(U)[\tau^{\pm 1}] \defeq \sum_{i \in \mathbb{Z}} \tau^i \mathcal{N}_{\alpha+i} \Omega^k(U).
\end{align}
It induces a filtration $\mathcal{N}_{\bullet}$ on $\Omega^k(U)\left[\theta\right]$.
More precisely, we have
$$
\mathcal{N}_\alpha \Omega^k(U)[\theta] = \mathcal{N}_\alpha \Omega^k(U)+\theta \mathcal{N}_{\alpha-1} \Omega^k(U) + \cdots + \theta^i \mathcal{N}_{\alpha-i} \Omega^k(U)+\cdots.
$$

Define the filtration on the complex $\Omega(f)$ by
$$\begin{aligned}
\mathcal{N}_\alpha\Omega(f) &\defeq \left(\cdots \to \mathcal{N}_{\alpha}\Omega^{n-1}(U)\left[\tau^{\pm 1}\right] \xrightarrow{\theta \mathrm{d} - \mathrm{d} f \wedge} \mathcal{N}_\alpha\Omega^{n}(U)\left[\tau^{\pm 1}\right] \to 0\right).
\end{aligned}$$
Let
$$\begin{aligned}
\mathcal{N}_{\alpha}G &\defeq H^n(\mathcal{N}_{\alpha}\Omega(f)), \quad
\mathcal{N}_{<\alpha}G &\defeq H^n(\mathcal{N}_{<\alpha}\Omega(f)).
\end{aligned}$$
We will show that $\mathcal{N}_{\alpha}G$ and $\mathcal{N}_{<\alpha}G$ are sub-modules of $G$ (see Lemma \ref{cor:strict}),
but at this moment, we only have canonical maps 
$\mathcal{N}_{\beta}G \to \mathcal{N}_{< \alpha}G \to \mathcal{N}_{\alpha}G \to G$
for all $\beta < \alpha$.
Denote $\operatorname{Gr}^{\mathcal{N}}_{\alpha}G \defeq \operatorname{coker}\left(\mathcal{N}_{< \alpha}G \to \mathcal{N}_{\alpha}G\right)$.

We define $\mathcal{N}_{\bullet}\Omega_0(f)$, $\mathcal{N}_{\bullet}K(f)$, $\mathcal{N}_{\bullet}G_0$ and $\mathcal{N}_{\bullet}J_f$ in the same way.

\begin{lemma}
\label{lemma:exact-gr-N}
Fix notations by the following diagram:
\begin{align}
\label{align:three-complexes-gr}
\xymatrix{
\operatorname{Gr}^{\mathcal{N}}_{\alpha}\tilde{K}(f): & \cdots \ar[r] & \operatorname{Gr}^{\mathcal{N}}_{\alpha}\Omega^{n-1}(U) \ar[r]^{-\mathrm{d} f \wedge } & \operatorname{Gr}^{\mathcal{N}}_{\alpha}\Omega^n(U) \ar[r]^{\overline{\epsilon}} & \operatorname{Gr}^{\mathcal{N}}_{\alpha}J_f \ar[r] & 0\\
\operatorname{Gr}^{\mathcal{N}}_{\alpha}\tilde{\Omega}_0(f): & \cdots \ar[r] & \operatorname{Gr}^{\mathcal{N}}_{\alpha}\Omega^{n-1}(U)\left[\theta\right] \ar[r]^{\theta \mathrm{d}-\mathrm{d} f \wedge } \ar[d] \ar[u] & \operatorname{Gr}^{\mathcal{N}}_{\alpha}\Omega^n(U)\left[\theta\right] \ar[r]^{\epsilon_0} \ar[d] \ar[u] & \operatorname{Gr}^{\mathcal{N}}_{\alpha}G_0 \ar[r] \ar[d] \ar[u] & 0\\
\operatorname{Gr}^{\mathcal{N}}_{\alpha}\tilde{\Omega}(f): & \cdots \ar[r] & \operatorname{Gr}^{\mathcal{N}}_{\alpha}\Omega^{n-1}(U)\left[\tau^{ \pm 1}\right] \ar[r]^{\theta \mathrm{d}-\mathrm{d} f \wedge } & \operatorname{Gr}^{\mathcal{N}}_{\alpha}\Omega^n(U)\left[\tau^{ \pm 1}\right] \ar[r]^{\epsilon} & \operatorname{Gr}^{\mathcal{N}}_{\alpha}G \ar[r] & 0
}\end{align}
The horizontal lines are exact.
Furthermore, we have
\begin{align}
\label{align:gr-G-on-grade-N-G}
\operatorname{Gr}^{\Phi}_{p} H^n\left(\operatorname{Gr}^\mathcal{N}_\alpha\Omega(f)\right) &\cong \operatorname{Gr}^\mathcal{N}_{\alpha+p}J(f) \tau^{p},
\end{align}
for any $p$.
\end{lemma}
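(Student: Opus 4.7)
The plan is to handle both assertions together: the exactness of the rows in (\ref{align:three-complexes-gr}) will feed into the formula (\ref{align:gr-G-on-grade-N-G}) via a spectral sequence degeneration with respect to the $\Phi$-filtration. The core technical input powering everything is a Koszul-type acyclicity for the Newton-graded pieces $\operatorname{Gr}^{\mathcal{N}}_\bullet K(f)$.

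First I would observe that the unfiltered rows of (\ref{align:three-complexes}) are exact by construction: surjectivity at the rightmost term is built in, exactness at $\Omega^n(U)[?]$ is the defining quotient, and exactness in the remaining degrees reduces to $H^{<n}(K(f))=0$. The latter is Kouchnirenko's theorem for a non-degenerate convenient Laurent polynomial. Passing to $\operatorname{Gr}^{\mathcal{N}}_\alpha$, a short calculation shows that $d$ strictly lowers Newton degree by one while $df\wedge$ preserves it exactly. For $f=f_{P,\mathbf{a}}$, every monomial sits on $P(0)$ with $\deg_P(v)=1$, so each $t_i\partial_i f$ is homogeneous of Newton degree $1$ and $\operatorname{Gr}^{\mathcal{N}}_\bullet K(f)$ is literally the Koszul complex on $(t_i\partial_i f)$ in the graded ring $\operatorname{Gr}^{\mathcal{N}}_\bullet K[\mathbf{t}^{\pm 1}]$. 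Non-degeneracy then gives acyclicity of this graded Koszul complex in degrees $<n$, which is exactness of the top row of (\ref{align:three-complexes-gr}).

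For the middle and bottom rows, I would decompose $\operatorname{Gr}^{\mathcal{N}}_\alpha\Omega^k(U)[\tau^{\pm 1}]\cong\bigoplus_{i}\tau^i\operatorname{Gr}^{\mathcal{N}}_{\alpha+i}\Omega^k(U)$ (and analogously for $[\theta]$ with $i\le 0$). The component $-df\wedge$ of the differential acts summand-wise as the Koszul differential on each $\operatorname{Gr}^{\mathcal{N}}_{\alpha+i}K(f)$, while $\theta d=\tau^{-1}d$ shifts $\tau^i\mapsto\tau^{i-1}$ and simultaneously decreases the Newton degree of the coefficient by one. Filtering by the $\tau$-exponent (equivalently, by $\Phi$), the $E_1$ page of the resulting spectral sequence is a direct sum of the graded Koszul cohomologies, which by the previous paragraph are concentrated in top degree; hence the spectral sequence degenerates and the bottom two rows of (\ref{align:three-complexes-gr}) are exact.

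For (\ref{align:gr-G-on-grade-N-G}), I would use the $\Phi$-filtration on $\operatorname{Gr}^{\mathcal{N}}_\alpha\Omega(f)$. Since $\Phi_p\Omega(f)=\tau^p\Omega_0(f)$, one has $\operatorname{Gr}^{\Phi}_p\operatorname{Gr}^{\mathcal{N}}_\alpha\Omega^k[\tau^{\pm 1}]\cong\tau^p\operatorname{Gr}^{\mathcal{N}}_{\alpha+p}\Omega^k(U)$, and the induced differential is $-df\wedge$ alone, because $\theta d$ drops the $\tau$-degree by one and therefore vanishes in $\operatorname{Gr}^{\Phi}$. Consequently $\operatorname{Gr}^{\Phi}_p\operatorname{Gr}^{\mathcal{N}}_\alpha\Omega(f)\cong\tau^p\operatorname{Gr}^{\mathcal{N}}_{\alpha+p}K(f)$, whose $n$-th cohomology is $\tau^p\operatorname{Gr}^{\mathcal{N}}_{\alpha+p}J_f$. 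The $\Phi$-spectral sequence converging to $H^\bullet(\operatorname{Gr}^{\mathcal{N}}_\alpha\Omega(f))$ has $E_1$ concentrated in total degree $n$ (by the exactness already proved), so it degenerates at $E_1$ and delivers the claimed identification $\operatorname{Gr}^{\Phi}_pH^n(\operatorname{Gr}^{\mathcal{N}}_\alpha\Omega(f))\cong\operatorname{Gr}^{\mathcal{N}}_{\alpha+p}J_f\cdot\tau^p$. The main obstacle is the Kouchnirenko-type acyclicity in the first step; the downstream work is essentially formal bookkeeping of the $\mathcal{N}$-, $\Phi$-, $\theta$-, and $\tau$-gradings, and in our setting the symbol computation is especially clean because the leading symbols of $t_i\partial_i f$ coincide with the polynomials themselves.
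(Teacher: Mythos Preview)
Your proposal is correct and follows essentially the same route as the paper: Kouchnirenko's acyclicity for $\operatorname{Gr}^{\mathcal{N}} K(f)$ in degrees $<n$, then the $\Phi$-filtration spectral sequence on $\operatorname{Gr}^{\mathcal{N}}_\alpha\Omega(f)$ (degenerating because $E_1$ is concentrated in cohomological degree $n$) to handle the remaining rows and to deliver (\ref{align:gr-G-on-grade-N-G}). Two small remarks: the exactness of the \emph{unfiltered} rows in (\ref{align:three-complexes}) is not needed as input here---in the paper it is derived afterward as a consequence (Lemma~\ref{cor:strict}(ii))---and since the lemma is stated for general non-degenerate convenient $f$, your specialization to $f_{P,\mathbf{a}}$ is unnecessary, as the paper simply cites \cite[Theorem~2.8]{kouchnirenko1976polyedres} directly for the graded Koszul acyclicity.
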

\begin{proof}
By \cite[Theorem 2.8.]{kouchnirenko1976polyedres}, we have 
$
H^i\left(\operatorname{Gr}^{\mathcal{N}}K(f)\right) = 0
$
for all $i \neq n$.
As a consequence, we have a commuatative diagram
$$\xymatrix{
H^{n-1}\left(\operatorname{Gr}^{\mathcal{N}}_{\alpha}K(f)\right) \ar[r] \ar@{=}[d] &
H^{n}\left(\mathcal{N}_{<\alpha}K(f)\right) \ar[r] \ar@{=}[d] &
H^{n}\left(\mathcal{N}_{\alpha}K(f)\right) \ar[r] \ar@{=}[d] &
H^n\left(\operatorname{Gr}^{\mathcal{N}}_{\alpha}K(f)\right) \ar[r] \ar@{-->}[d] &
0\\
0 \ar[r] &
\mathcal{N}_{<\alpha}J_f \ar[r] &
\mathcal{N}_{\alpha}J_f \ar[r] &
\operatorname{Gr}^{\mathcal{N}}_{\alpha}J_f \ar[r] &
0
}$$
where the first horizontal line is a exact sequence.
So $H^n\left(\operatorname{Gr}^{\mathcal{N}}_{\alpha}K(f)\right) \cong \operatorname{Gr}^{\mathcal{N}}_{\alpha}J_f$.
It follows that $\operatorname{Gr}^{\mathcal{N}}_{\alpha}\tilde{K}(f)$ is exact.
Note that
$$\begin{aligned}
\operatorname{Gr}^{\mathcal{N}}_{\alpha}\Omega^{k}(U)\left[\tau^{\pm 1}\right] 
&\cong \bigoplus_{-\alpha \leq i} \tau^i \operatorname{Gr}^{\mathcal{N}}_{\alpha+i}\Omega^{k}(U) \\
\Phi_p\operatorname{Gr}^{\mathcal{N}}_{\alpha}\Omega^{k}(U)\left[\tau^{\pm 1}\right] 
&\cong \bigoplus_{-\alpha \leq i \leq p} \tau^i \operatorname{Gr}^{\mathcal{N}}_{\alpha+i}\Omega^{k}(U).
\end{aligned}$$
Hence the filtration $\Phi_{\bullet}$ on the complex $\operatorname{Gr}^{\mathcal{N}}_\alpha\Omega(f)$ is bounded below and exhaustive
and
$$\begin{aligned}
\operatorname{Gr}^{\Phi}_{p}\operatorname{Gr}^\mathcal{N}_\alpha\Omega(f) &\cong \left(\cdots \to \operatorname{Gr}^\mathcal{N}_{\alpha+p}\Omega^{n-1}(U)\tau^{p} \xrightarrow{- \mathrm{d} f \wedge} \operatorname{Gr}^\mathcal{N}_{\alpha+p}\Omega^{n}(U)\tau^{p} \to 0\right)\\
&\cong \operatorname{Gr}^{\mathcal{N}}_{\alpha + p}K(f) \tau^{p}.
\end{aligned}$$
Therefore we have a spectral sequence
\begin{align}
\label{align:spectral-sequence-G-on-grade-N}
E_1^{pq} = 
H^{p+q}\left(\operatorname{Gr}^{\mathcal{N}}_{\alpha-p}K(f)\right)\tau^{-p} 
\Rightarrow H^{p+q}(\operatorname{Gr}^\mathcal{N}_\alpha\Omega(f))
\end{align}
Thus 
$
H^i\left(\operatorname{Gr}^\mathcal{N}_\alpha\Omega(f)\right) = 0
$
for all $i \neq n$,
and we get 
$$\begin{aligned}
\operatorname{Gr}^{\Phi}_{p} H^n\left(\operatorname{Gr}^\mathcal{N}_\alpha\Omega(f)\right) 
\cong H^n\left(\operatorname{Gr}^\mathcal{N}_{\alpha + p}K(f)\tau^p\right) 
\cong \operatorname{Gr}^\mathcal{N}_{\alpha+p}J(f) \tau^{p},
\end{aligned}$$
where the last isomorphism comes from the discussion at the beginning.
Hence $\operatorname{Gr}^\mathcal{N}_\alpha\tilde{\Omega}(f)$ is exact.

Similarly we can show that $\operatorname{Gr}^{\mathcal{N}}_\alpha\tilde{\Omega}_0(f)$ is exact.
\end{proof}

\begin{lemma}[\protect{\cite[Lemma 4.3]{kouchnirenko1976polyedres}}]
\label{lemma:exact-to-strict}
Let $A$ be a ring. Let 
$$
(L, F_{\bullet}) \xrightarrow{g} (M, F_{\bullet}) \xrightarrow{f} (N, F_{\bullet})
$$
be a complex of filtered $A$-modules.
Assume that the index of $F_{\bullet}$ is discrete, $F_{\bullet}$ is exhaustive on $M$ and
$$
\operatorname{Gr}^F L \xrightarrow{g} \operatorname{Gr}^F M \xrightarrow{f} \operatorname{Gr}^F N
$$
is exact.
Then $f$ is strict, i.e. $f(M) \cap F_{\alpha} N = f(F_{\alpha} M)$, for all $\alpha$.
\end{lemma}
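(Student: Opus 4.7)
The plan is a descending induction on the filtration index of a chosen preimage, using the exactness of the associated graded complex to lower the index at each step. Since the statement is symmetric in $L, M, N$ (nothing about strictness of $g$ is claimed), I only need to chase a single element.

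\smallskip
First I would fix $y \in f(M) \cap F_\alpha N$ and pick any $x \in M$ with $f(x) = y$. By exhaustiveness of $F_\bullet$ on $M$, there is some $\gamma$ with $x \in F_\gamma M$. If $\gamma \leq \alpha$ we are done, so assume $\gamma > \alpha$. By discreteness of the index set, the set of jumps in the interval $[\alpha,\gamma]$ is finite, so there is a largest index $\gamma' < \gamma$ at which $F_\bullet$ jumps (with the convention that $F_{\gamma'} = \bigcup_{\beta < \gamma} F_\beta$), and without loss of generality $\gamma' \geq \alpha$.

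\smallskip
Next I would pass to the graded piece. Since $y \in F_\alpha N \subseteq F_{\gamma'} N$, the class $\overline{f(x)}$ of $y$ in $\operatorname{Gr}^F_\gamma N = F_\gamma N / F_{\gamma'} N$ vanishes. Thus the class $\overline{x} \in \operatorname{Gr}^F_\gamma M$ lies in $\ker(f\colon \operatorname{Gr}^F_\gamma M \to \operatorname{Gr}^F_\gamma N)$. By the hypothesis of exactness of the graded complex, there is $\overline{z} \in \operatorname{Gr}^F_\gamma L$ with $g(\overline{z}) = \overline{x}$; lift it to $z \in F_\gamma L$. Set $x_1 \defeq x - g(z)$. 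Then $x_1 \in F_{\gamma'} M$ by construction, and since $fg = 0$ (we have a complex) we still have $f(x_1) = f(x) = y$.

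\smallskip
Finally I would iterate this construction. At each stage I replace a preimage lying in $F_\gamma M$ by a preimage lying in $F_{\gamma'} M$ with $\gamma' < \gamma$ the next jump index. Because discreteness forces only finitely many jumps in $[\alpha,\gamma]$, after at most finitely many iterations I obtain some $x' \in F_\alpha M$ with $f(x') = y$, proving $f(M) \cap F_\alpha N \subseteq f(F_\alpha M)$. The reverse inclusion is automatic from $f(F_\alpha M) \subseteq F_\alpha N$. The only mildly delicate point is the bookkeeping of the index set, which is handled cleanly by discreteness; the essential content is just the one-step diagram chase in the middle paragraph.
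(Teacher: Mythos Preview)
Your proof is correct and follows essentially the same approach as the paper's: a descending induction on the filtration index of a preimage, using exactness of the associated graded to replace $x$ by $x - g(z)$ lying in a strictly smaller filtered piece, and invoking discreteness to terminate. The paper's write-up is terser (it simply writes $m - g(l) \in F_{<\beta}M$ and says ``by induction''), but the argument is identical.
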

\begin{proof}
For any $f(m) \in f(M) \cap F_{\alpha} N$,
as $F_{\bullet}$ is exhaustive on $M$,
$m \in F_{\beta}M$ for some $\beta$.
If $\beta > \alpha$, 
then $f([m]) = 0 \in \operatorname{Gr}^F_{\beta} N$.
Hence there exists $l \in F_{\beta} L$, such that $[m] = g([l]) \in \operatorname{Gr}^F_{\beta} M$, i.e. $m - g(l) \in F_{< \beta} M$.
Thus $f(m) = f(m - g(l)) \in f\left(F_{< \beta} M\right)$.
As the index of $F_{\bullet}$ is discrete, by induction, we know that $f(m) \in f\left(F_{\alpha} M\right)$.
\end{proof}

\begin{lemma}
\label{cor:strict}
\begin{enumerate}[(i)]
\item 
$\mathcal{N}_{\alpha} J_f$ ($\mathcal{N}_{\alpha} G_0$, $\mathcal{N}_{\alpha} G$, respectively) are submodules of $J_f$ ($G_0$, $G$, respectively) for all $\alpha$
and all the morphisms in (\ref{align:three-complexes}) are strict with respect to $\mathcal{N}_{\bullet}$.
\item The three horizontal lines in (\ref{align:three-complexes}) are exact.
\end{enumerate}
\end{lemma}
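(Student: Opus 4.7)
The plan is to derive both parts of the lemma from Lemma~\ref{lemma:exact-gr-N} (exactness on the associated graded) via the strictness criterion of Lemma~\ref{lemma:exact-to-strict}. Before applying the latter we verify its hypotheses for $\mathcal{N}_\bullet$ on every module in (\ref{align:three-complexes}): exhaustiveness is built into the definitions, and discreteness of the index follows from the fact that $P$ is a lattice polytope containing $0$ in its interior, which forces $\deg_P$ to take values in $\tfrac{1}{d}\mathbb{Z}_{\geq 0}$ on $N$ for some integer $d\geq 1$; this property propagates to $\Omega^k(U)$, $\Omega^k(U)[\theta]$, and $\Omega^k(U)[\tau^{\pm 1}]$ via the formulas defining the Newton filtration on these modules.

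For part~(i), apply Lemma~\ref{lemma:exact-to-strict} at every position of each horizontal line of (\ref{align:three-complexes-gr}), with the required graded exactness supplied by Lemma~\ref{lemma:exact-gr-N}. This yields strictness with respect to $\mathcal{N}_\bullet$ of every differential and of the three cokernel maps $\overline{\epsilon}$, $\epsilon_0$, $\epsilon$; the remaining vertical maps $p,i,\tilde p,\tilde i$ in (\ref{align:three-complexes}) are strict by direct inspection. Strictness of $-\mathrm{d} f\wedge\colon \Omega^{n-1}(U)\to \Omega^n(U)$ reads
$$
(-\mathrm{d} f\wedge)\,\Omega^{n-1}(U)\cap \mathcal{N}_\alpha\Omega^n(U)
=
(-\mathrm{d} f\wedge)\,\mathcal{N}_\alpha\Omega^{n-1}(U),
$$
which is exactly the condition for the canonical map $\mathcal{N}_\alpha J_f = \mathcal{N}_\alpha\Omega^n(U)/(-\mathrm{d} f\wedge)\mathcal{N}_\alpha\Omega^{n-1}(U) \to J_f$ to be injective, so $\mathcal{N}_\alpha J_f$ sits as a submodule of $J_f$. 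The same argument with $\theta\mathrm{d}-\mathrm{d} f\wedge$ on $\Omega^{\bullet}(U)[\theta]$ and $\Omega^{\bullet}(U)[\tau^{\pm 1}]$ yields $\mathcal{N}_\alpha G_0 \hookrightarrow G_0$ and $\mathcal{N}_\alpha G \hookrightarrow G$.

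For part~(ii), exactness at the rightmost term of each horizontal line is surjectivity of the cokernel map, which is by construction; exactness at $\Omega^n(U)$, $\Omega^n(U)[\theta]$, $\Omega^n(U)[\tau^{\pm 1}]$ is the very definition of $J_f$, $G_0$, $G$ as the $n$-th cohomology of the respective complexes. For the remaining positions we must show $H^i(K(f)) = H^i(\Omega_0(f)) = H^i(\Omega(f)) = 0$ for $i<n$, and we lift this from the graded vanishing recorded in Lemma~\ref{lemma:exact-gr-N} by the standard inductive descent on filtration level: given a cycle $c\in\mathcal{N}_\alpha$ at such a position, graded exactness produces $\eta\in\mathcal{N}_\alpha$ with $c-d\eta\in \mathcal{N}_{<\alpha}$, and discreteness of $\mathcal{N}_\bullet$ allows us to iterate.

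The main subtlety lies in the $\tau^{\pm 1}$ complex, where $\mathcal{N}_\bullet$ is not globally bounded below. This is handled by observing that every element of $\Omega^k(U)[\tau^{\pm 1}]$ has finite $\tau$-support, so in any individual descent only a bounded subset of the discrete range of $\mathcal{N}_\bullet$ is reached; the iteration therefore terminates in finitely many steps and produces an honest primitive.
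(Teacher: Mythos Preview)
Your approach is essentially the same as the paper's. For part~(i) the arguments are identical: feed Lemma~\ref{lemma:exact-gr-N} into the strictness criterion of Lemma~\ref{lemma:exact-to-strict}, and read off that $\mathcal{N}_\alpha J_f$, $\mathcal{N}_\alpha G_0$, $\mathcal{N}_\alpha G$ inject. For part~(ii) the paper phrases the passage from graded exactness to exactness via the spectral sequence of $\mathcal{N}_\bullet$, whereas you do the equivalent inductive descent by hand; these are the same argument.

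One point worth noting: you are right to flag the $\tau^{\pm 1}$ case as subtle. The paper asserts that $\mathcal{N}_\bullet$ on $\Omega(f)$ is bounded below, but this is false as stated, since $\mathcal{N}_\alpha\Omega^k(U)[\tau^{\pm 1}]=\sum_i\tau^i\mathcal{N}_{\alpha+i}\Omega^k(U)\neq 0$ for every $\alpha$. Your finite-$\tau$-support fix is on the right track but is not quite complete as written: you must also ensure that the successive corrections $\eta_j$ do not enlarge the $\tau$-support, otherwise the range of $\mathcal{N}$-levels reached need not stay bounded. The clean way to close this is to observe that if $c$ has $\tau$-support in $(-\infty,b]$ then $c\in\Phi_b\Omega(f)=\tau^b\Omega_0(f)$, and on this subcomplex the restricted Newton filtration \emph{is} bounded below (equivalently, multiply by $\theta^b$ and work in $\Omega_0(f)$, or use that $\Omega(f)=\Omega_0(f)\otimes_{K[\theta]}K[\tau^{\pm 1}]$ and localization is exact). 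With that adjustment your argument for~(ii) goes through.
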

\begin{proof}
\begin{enumerate}[(i)]
\item 
By Lemma \ref{lemma:exact-gr-N}, and Lemma \ref{lemma:exact-to-strict}, we know that all the boundary operators (i.e. $- \operatorname{d} f \wedge$ in the first horizontal line and $\theta \operatorname{d} - \operatorname{d} f \wedge$ in the second and the third horizontal lines) in (\ref{align:three-complexes}) are strict.
Therefore we know that 
$$\begin{aligned}
\mathcal{N}_{\alpha} J_f 
&= \mathcal{N}_{\alpha}\Omega^n(U)/\operatorname{d} f \wedge \mathcal{N}_{\alpha}\Omega^{n-1}(U)\\
&= \mathcal{N}_{\alpha}\Omega^n(U)/\left(\mathcal{N}_{\alpha}\Omega^n(U) \cap (\operatorname{d} f \wedge \Omega^{n-1}(U))\right)\\
&\cong\operatorname{im}\left(\mathcal{N}_{\alpha}\Omega^n(U) \to J_f\right) \subset J_f.
\end{aligned}$$
Similarly, we have
$$
\mathcal{N}_{\alpha} G_0
\cong \operatorname{im}\left(\mathcal{N}_{\alpha}\Omega^n(U)[\theta] \to G_0\right),
\quad
\mathcal{N}_{\alpha} G
\cong \operatorname{im}\left(\mathcal{N}_{\alpha}\Omega^n(U)\left[\tau^{\pm 1}\right] \to G\right).
$$
Directly from their definitions, we can see that other morphisms in (\ref{align:three-complexes}) are also strict.

\item
Note that filtrations $\mathcal{N}_{\bullet}$ on complexes $\Omega(f), \Omega_0(f), K(f)$ are bounded below and exhaustive.
Hence spectral sequences associate to them converge.
By Lemma \ref{lemma:exact-gr-N}, all of them collapse.
Therefore we know that $\tilde{\Omega}(f), \tilde{\Omega}_0(f), \tilde{K}(f)$ are exact.
\end{enumerate}
\end{proof}

\begin{remark}
\label{rmk:newton-fil-def}
As
$$\begin{aligned}
\mathcal{N}_{\alpha}\Omega^n(U)\left[\tau^{\pm 1}\right] 
= \sum_{k \geq 0} \tau^{k} i\left(\mathcal{N}_{\alpha+k}\Omega^n(U)\left[\theta\right]\right).
\end{aligned}$$
We have
$$\begin{aligned}
\mathcal{N}_\alpha G 
&= \epsilon\left(\mathcal{N}_{\alpha}\Omega^n(U)\left[\tau^{\pm 1}\right]\right)
= \epsilon\left(\sum_{k \geq 0} \tau^{k} i\left(\mathcal{N}_{\alpha+k}\Omega^n(U)\left[\theta\right]\right)\right)\\
&= \sum_{k \geq 0} \tau^{k} (\epsilon\circ i)\left(\mathcal{N}_{\alpha+k}\Omega^n(U)\left[\theta\right]\right)
= \sum_{k \geq 0} \tau^{k} (\tilde{i}\circ \epsilon_0)\left(\mathcal{N}_{\alpha+k}\Omega^n(U)\left[\theta\right]\right)\\
&= \sum_{k \geq 0} \tau^{k} \tilde{i}\left(\mathcal{N}_{\alpha+k} G_0\right)
= \mathcal{N}_\alpha G_0+\tau \mathcal{N}_{\alpha+1} G_0+\cdots+\tau^k \mathcal{N}_{\alpha+k} G_0+\cdots
\end{aligned}$$
Therefore the filtrations $\mathcal{N}_{\bullet}$ on $J_f, G_0$ and $G$ defined above coincide with those in \cite[Section 4.a.]{douai2003gauss}.
\end{remark}

\subsection{The vanishing cycle}
\begin{definition}\label{def:vanishing-cycle}
Let $H_\alpha=\operatorname{Gr}_\alpha^{\mathcal{N}}(G)$. 
\begin{enumerate}[(a)]
\item 
Let
$$
\nu = \begin{cases}
n, & \alpha = 0,\\
n-1, & 0 < \alpha < 1.
\end{cases}
$$

\item
The filtration $\Phi_{\bullet}$ on $G$ induces a filtration $\Phi_{\bullet}$ on $H_{\alpha}$.
Define the Hodge filtration on $H_{\alpha}$ to be $F^{\bullet} H_\alpha \defeq \Phi_{\nu - \bullet} H_{\alpha}$.

\item
Let $N \defeq - (\tau \nabla_{\partial_\tau} + \alpha)$. It is a nilpotent endomorphism on $H_{\alpha}$ (see \cite[Lemma 12.2]{sabbah1999hypergeometric}).
Define the weight filtration on $H_{\alpha}$ to be $W_{\bullet} = M(N)_{\bullet - \nu}$, where $M(N)$ denotes the monodromy filtration of $N$.
\end{enumerate}
\end{definition}
\begin{remark}
Note that the Newton filtration $\mathcal{N}_{\bullet}G$ is equal to the Malgrange–Kashiwara filtration $V_{\bullet}G$. 
See \cite[Lemma 12.2.]{sabbah1999hypergeometric}. For the definition of $V_{\bullet}G$, see  \cite[p178]{sabbah1999hypergeometric}.
Therefore we can also write $H_\alpha=\operatorname{Gr}_\alpha^{V}(G)$. 
\end{remark}

Denote
$$
H = \bigoplus_{\alpha \in[0,1)} H_\alpha, \quad 
H_{\neq 0} \defeq \bigoplus_{\alpha \in(0,1)} H_\alpha.
$$
Then we have $N$, $F^{\bullet}$, $W_{\bullet}$ on $H_0$ (on $H_{\neq 0}$, respectively).
When $K = \mathbb{C}$, we know that they underlie a polarized mixed Hodge structure of weight $n$ (of weight $n-1$, respectively). 
See \cite[p4]{sabbah2018some}, \cite[p187]{hertling2002frobenius},\cite[p215]{sabbah1999hypergeometric},\cite[6.5]{scherk1985mixed}.
By \cite[Remark 3.8]{saito1989structure} (or \cite{scherk1985mixed}), we know that $N^j : (H_{\alpha}, F^{\bullet}) \to (H_{\alpha}, F^{\bullet-j})$ are strict morphisms for any $j \geq 0, \alpha \in [0,1)$.
We have the following result:
\begin{lemma}[\protect{\cite[Proposition 3.7]{saito1989structure}}]
\label{lemma:van-jac-rel}
Let $H$ be a finite-dimensional vector space, $N : H \to H$ a nilpotent linear transformation and $\Phi_{\bullet}$ an increasing filtration such that $N(\Phi_i) \subset \Phi_{i+1}$.

Suppose that $N^j : (H, \Phi_{\bullet}) \to (H, \Phi_{\bullet+j})$ are strict morphisms for any $j \geq 0$.
Then $(H, \Phi_{\bullet}, N)$ are isomorphic to direct sums of the copies of $\left(K[N] / K[N] N^m, F_{\bullet-p}, N\right)$ for some $p \in \mathbb{Z}, m \in \mathbb{N}$, where $F_k K[N] = \operatorname{span}\{1, N, \cdots, N^k\}$.

As a consequence, there exists a spliting $H = \oplus I_i$ such that $\Phi_k = \oplus_{i \leq k} I_i$ and $N(I_i) \subset I_{i+1}$.
In other words, we have a linear isomorphism $H \xrightarrow{\sim} \operatorname{Gr}^{\Phi} H$, such that
$$\begin{aligned}
\xymatrix{
H \ar[r] \ar[d]_{N} & \operatorname{Gr}^{\Phi} H \ar[d]^{N}\\
H \ar[r] & \operatorname{Gr}^{\Phi} H.
}
\end{aligned}$$
is commutative, where $N : \operatorname{Gr}^{\Phi} H \to \operatorname{Gr}^{\Phi} H$ is induced by $\Phi_iH/\Phi_{i - 1}H \xrightarrow{N} \Phi_{i + 1}H/\Phi_{i}H$.
\end{lemma}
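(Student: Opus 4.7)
My plan is to construct the decomposition by lifting a Jordan basis from the associated graded $\operatorname{Gr}^{\Phi} H$ back to $H$, using the strictness hypothesis as the one non-trivial input that allows each lift to be corrected into a cyclic generator of the required type.

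On the graded side, $N$ induces a degree $+1$ nilpotent $\bar{N}$ on $\operatorname{Gr}^{\Phi} H$. Applying Jordan normal form for a nilpotent endomorphism of a graded vector space, I first choose homogeneous elements $\bar{v}_i \in \operatorname{Gr}^{\Phi}_{p_i} H$ together with positive integers $m_i$ such that $\bar{N}^{m_i - 1} \bar{v}_i \neq 0$, $\bar{N}^{m_i} \bar{v}_i = 0$, and $\{\bar{N}^j \bar{v}_i : 0 \leq j < m_i\}$ is a basis of $\operatorname{Gr}^{\Phi} H$.

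Next, and this is the heart of the argument, I lift each $\bar{v}_i$ to $v_i \in \Phi_{p_i}$ with $N^{m_i} v_i = 0$. Pick any $v_0 \in \Phi_{p_i}$ reducing to $\bar{v}_i$. Since $\bar{N}^{m_i} \bar{v}_i = 0$, the element $N^{m_i} v_0 \in \Phi_{p_i + m_i}$ actually lies in $\Phi_{p_i + m_i - 1}$. Strictness of $N^{m_i} : (H, \Phi_{\bullet}) \to (H, \Phi_{\bullet + m_i})$ gives
$$N^{m_i}(\Phi_{p_i - 1}) = N^{m_i}(H) \cap \Phi_{p_i + m_i - 1},$$
so there exists $w \in \Phi_{p_i - 1}$ with $N^{m_i} w = N^{m_i} v_0$. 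Setting $v_i \defeq v_0 - w$ yields a lift of $\bar{v}_i$ in $\Phi_{p_i}$ annihilated by $N^{m_i}$.

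Finally, the submodules $K[N] v_i \subset H$ assemble into the desired decomposition. The vectors $N^j v_i$ for $0 \leq j < m_i$ reduce modulo $\Phi$ to the graded basis $\{\bar{N}^j \bar{v}_i\}$, so they are linearly independent and, by a dimension count, form a basis of $H$; each $K[N] v_i$ is therefore isomorphic to $K[N]/K[N] N^{m_i}$ via $N^j \mapsto N^j v_i$. The filtration matches $F_{\bullet - p_i}$: if $\sum_{i,j} c_{ij} N^j v_i$ belongs to $\Phi_k$, then taking the largest $k' > k$ with some $c_{ij} \neq 0$ and $p_i + j = k'$ would produce a nonzero image in $\operatorname{Gr}^{\Phi}_{k'} H$, contradicting membership in $\Phi_k$; hence $\Phi_k = \bigoplus_i \operatorname{span}\{N^j v_i : p_i + j \leq k\}$. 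The splitting $H \xrightarrow{\sim} \operatorname{Gr}^{\Phi} H$ sending $N^j v_i \mapsto \bar{N}^j \bar{v}_i$ is then manifestly $K[N]$-equivariant, giving the commutative diagram. The principal obstacle is the lifting step: without strictness one would have no way to correct $v_0$ by an element of $\Phi_{p_i - 1}$ to kill $N^{m_i} v_0$, and the rest of the argument would collapse.
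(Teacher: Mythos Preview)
The paper does not supply its own proof of this lemma; it simply quotes the result from \cite[Proposition 3.7]{saito1989structure}. So there is no in-paper argument to compare against.

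That said, your proof is correct and is essentially the standard argument for this type of structure theorem. The key step---using strictness of $N^{m_i}$ to correct an arbitrary lift $v_0$ of a graded Jordan generator $\bar v_i$ by an element of $\Phi_{p_i-1}$ so that the corrected lift is genuinely annihilated by $N^{m_i}$---is exactly the place where the strictness hypothesis enters, and you have identified and executed it cleanly. The remaining verifications (linear independence of the $N^j v_i$ by reduction to the graded basis, and the filtration identity $\Phi_k=\bigoplus_i\operatorname{span}\{N^jv_i:p_i+j\le k\}$ via the ``largest $k'$'' argument) are routine and correctly handled. One tacit assumption worth making explicit is that $\Phi_\bullet$ is exhaustive and separated on the finite-dimensional $H$, so that $\dim\operatorname{Gr}^\Phi H=\dim H$ and the dimension count in your third step goes through; in the paper's intended application this is automatic.
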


Note that by 
Lemma \ref{lemma:exact-gr-N},
we have isomorphisms
$$
\operatorname{Gr}^{\Phi}_{p} H_{\alpha} 
\xrightarrow{\sim} \operatorname{Gr}^\mathcal{N}_{\alpha+p}J(f) \tau^{p}
\xrightarrow[\sim]{\theta^p} \operatorname{Gr}^\mathcal{N}_{\alpha+p}J(f).
$$
By (\ref{align:partial_theta}), we have
$$
\tau \nabla_{\partial_{\tau}} [\omega \tau^k] = [k \omega \tau^k - f \omega \tau^{k+1}].
$$
Hence in $\operatorname{Gr}_p^{\Phi} H_{\alpha}$,
$$
N [\omega \tau^k] = [f \omega \tau^{k+1}].
$$
Therefore we have the following commutative diagram
$$\xymatrix{
\operatorname{Gr}^{\Phi}_{p} H_{\alpha} \ar[r]^{\theta^p}_{\sim} \ar[d]_{N} & \operatorname{Gr}^{\mathcal{N}}_{\alpha+p} J_f \ar[d]^{[f]}\\
\operatorname{Gr}^{\Phi}_{p+1} H_{\alpha} \ar[r]^{\theta^{p+1}}_{\sim} & \operatorname{Gr}^{\mathcal{N}}_{\alpha+p+1} J_f.
}$$
In other words, we have
\begin{align}\label{align:global-H-J-equal}
\left(\operatorname{Gr}^{\Phi}_{\bullet} H_{\alpha}, N\right) 
\xrightarrow{\sim}
\left(\operatorname{Gr}^{\mathcal{N}}_{\alpha+\bullet} J_f, [f]\right).    
\end{align}
Therefore we have 
$$
\left(H_{\alpha}, N\right) 
\xrightarrow{\sim}
\left(\operatorname{Gr}^{\Phi}_{\bullet} H_{\alpha}, N\right) 
\xrightarrow{\sim}
\left(\operatorname{Gr}^{\mathcal{N}}_{\alpha+\bullet} J_f, [f]\right).
$$
Note that the first isomorphism is not canonical.

\section{The graded Jacobian ring}

Consider the Laurent polynomial
\begin{align}
\label{align:def-f}
f = f_{P, \mathbf{a}} \defeq \sum_{v \in P(0)} a_v \mathbf{t}^v \in K[\mathbf{t}^{\pm 1}] = K[t_1^{\pm 1}, \cdots, t_n^{\pm 1}],
\end{align}
where $a_v \in K^*$, for all $v \in P(0)$.

\begin{lemma}\label{lemma:f_conv_non_dege}
$f_{P, \mathbf{a}}$ is convenient and non-degenerate.
\end{lemma}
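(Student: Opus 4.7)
The plan is to handle the two assertions separately and very directly from the hypotheses (a)--(c) on $P$.

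For \emph{convenience}, I would argue that the Newton polyhedron of $f_{P,\mathbf{a}}$ at infinity is exactly $P$. By definition it is the convex hull of $\{0\} \cup P(0)$, and since $P = \operatorname{conv}(P(0))$ contains $0$ in its interior by hypothesis (c), adding $0$ to the vertex set $P(0)$ does not change the convex hull. Hence the Newton polyhedron is $P$, and $0$ lies in its interior.

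For \emph{non-degeneracy}, I would unwind the definition. Let $F$ be a face of $P$ not containing $0$; since $0 \in \operatorname{int}(P)$, this is automatic for every proper face. Writing $f_F = \sum_{v \in F(0)} a_v \mathbf{t}^v$ and applying the logarithmic derivative $t_i \partial/\partial t_i$, the condition $\partial f_F/\partial t_i = 0$ on $U$ becomes
\[
\sum_{v \in F(0)} v_i \, a_v \mathbf{t}^v = 0, \qquad i = 1, \dots, n,
\]
or equivalently the vector identity $\sum_{v \in F(0)} (a_v \mathbf{t}^v)\, v = 0$ in $K^n$. Since $a_v \in K^*$ and $\mathbf{t} \in U$, each coefficient $a_v \mathbf{t}^v$ is nonzero. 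Thus it suffices to show that the vertex set $F(0)$ is \emph{linearly} independent in $\mathbb{Q}^n$.

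The key step is precisely this linear independence, and it is where the three hypotheses combine. Since $P$ is simplicial (b), the face $F$ is a simplex, so its vertices $v_0, \dots, v_k$ are affinely independent. Since $0 \in \operatorname{int}(P)$ (c), the proper face $F$ lies in a supporting hyperplane of the form $\{\ell = c\}$ with $\ell$ a linear functional and $c \neq 0$; in particular $\ell(v_i) = c$ for every $i$. Now if $\sum_i \lambda_i v_i = 0$ in $\mathbb{Q}^n$, applying $\ell$ yields $c \sum_i \lambda_i = 0$, hence $\sum_i \lambda_i = 0$, and affine independence then forces all $\lambda_i = 0$. This gives linear independence of $F(0)$, contradicting the existence of a solution with all $a_v \mathbf{t}^v \neq 0$, and concludes non-degeneracy.

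The only mildly delicate point is the production of the supporting hyperplane $\{\ell = c\}$ with $c \neq 0$ through an arbitrary face; I would obtain it by first taking a supporting hyperplane through a facet containing $F$ and noting that $0$ cannot lie on it, because $0$ is an interior point of $P$. Everything else is routine, and hypothesis (a) (that $P$ is a lattice polytope) is not actually needed for this lemma, though it of course enters later in the paper.
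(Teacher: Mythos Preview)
Your proof is correct and follows essentially the same approach as the paper: rewrite the system $\partial f_F/\partial t_i = 0$ on $U$ as $\sum_{v \in F(0)} v_i\, a_v \mathbf{t}^v = 0$, use linear independence of the vertices of $F$ to force each $a_v \mathbf{t}^v = 0$, and conclude that no point of $U$ satisfies the equations. You are in fact more careful than the paper, which simply asserts linear independence of $v_1,\dots,v_d$ from simpliciality without spelling out the supporting-hyperplane argument upgrading affine to linear independence, and which does not separately address convenience at all.
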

\begin{proof}
For any face $F$ of $P$, assume the vertices of $F$ are $v_1, \cdots, v_d$, where $v_i = (v_{i1}, \cdots, v_{in}) \in \mathbb{Q}^n = N_{\mathbb{Q}}$.
Then
$$
f_F = a_{v_1} t^{v_1} + \cdots + a_{v_d} t^{v_d}
$$
and
$$
(f_i)_F = v_{1i} a_{v_1} t^{v_1} + \cdots + v_{di} a_{v_d} t^{v_d},
$$
i.e.
$$
\begin{pmatrix}
(f_1)_F\\
\vdots\\
(f_n)_F
\end{pmatrix}
 = 
\begin{pmatrix}
v_{11} & \cdots & v_{d1}\\
\vdots & \ddots & \vdots\\
v_{1n} & \cdots & v_{dn}
\end{pmatrix}
\begin{pmatrix}
a_{v_1} t^{v_1}\\
\vdots\\
a_{v_d} t^{v_d}
\end{pmatrix}.
$$
As $P$ is simiplicial, we know that $v_1, \cdots, v_d$ are linearly independent. Hence $(f_1)_F = \cdots = (f_n)_F = 0$ if and only if $a_{v_j} t^{v_j} = 0$ for all $j$, i.e. $t^{v_j} = 0$ for all $j$.
Therefore $(f_1)_F = \cdots = (f_n)_F = 0$ define an empty subscheme in $U$.
\end{proof}

Keep the notations in Section \ref{section:Polytopes}.
For any $u \in \operatorname{Box}\left(\boldsymbol{\Sigma}_P\right)$,
let
$$
A_{P}(u) \defeq \operatorname{span}\left\{\mathbf{t}^w \middle|w \in P_u(\boldsymbol{\Sigma})\right\} \subset K\left[\mathbf{t}^{ \pm 1}\right].
$$
We have a monomorphism
$$
\operatorname{Gr}^{\mathcal{N}} A_{P}(u)
\hookrightarrow \operatorname{Gr}^{\mathcal{N}} K\left[\mathbf{t}^{ \pm 1}\right].
$$
By (\ref{align:decomp-P_u}), we have 
\begin{align*}
K[\mathbf{t}^{\pm 1}] = \bigoplus_{u \in \operatorname{Box}(\boldsymbol{\Sigma}_P)} A_{P}(u),
\end{align*}
and
\begin{align}
\label{decomposition-Gr-N}
\operatorname{Gr}^{\mathcal{N}} K[\mathbf{t}^{\pm 1}] = \bigoplus_{u \in \operatorname{Box}(\boldsymbol{\Sigma}_P)} \operatorname{Gr}^{\mathcal{N}} A_{P}(u).
\end{align}

\begin{lemma}
Let $a_{\rho} \in K^*$ ($\rho \in \Sigma_P(1)$) and let $u \in \operatorname{Box}(\boldsymbol{\Sigma}_P)$.
Denote $\boldsymbol{\Sigma}_P(u) = \overline{\boldsymbol{\operatorname{Star}}}_{\boldsymbol{\Sigma}_P}(\sigma(u))$.
Let $\mathcal{A}(\boldsymbol{\Sigma}_P(u))$ be the algebra of conewise polynomial functions on $\boldsymbol{\Sigma}_P(u)$. 
(See Definition \ref{def:conewise_polynomial}.)
We have a linear map
\begin{align}
\begin{aligned}
\phi_u = \phi_{\mathbf{a},u} : 
\mathcal{A}(\boldsymbol{\Sigma}_P(u)) 
&\to \operatorname{Gr}^{\mathcal{N}} A_{P}(u)
\end{aligned}
\end{align}
such that the following holds:
\begin{enumerate}
\item 
For $u = 0$, $\phi = \phi_0 : \mathcal{A}(\boldsymbol{\Sigma}_P) \to \operatorname{Gr}^{\mathcal{N}} A_{P}(0)$ is a ring isomorphism.

\item
For general $u$, $\phi_u$ is an isomorphism of $\mathcal{A}(\boldsymbol{\Sigma}_P(u))$-modules, and $\operatorname{Gr}^{\mathcal{N}} A_{P}(u)$ is a free $\mathcal{A}\left(\boldsymbol{\Sigma}_P(u)\right)$-module of rank $1$ with basis $\mathbf{t}^u$. 
We have a commutative diagram
\begin{align}
\label{align:fin}
\xymatrix{
\mathcal{A}(\Sigma_P) \ar[r]^{\phi} \ar[d] & \operatorname{Gr}^{\mathcal{N}} A_{P}(0) \ar[d]^{\mathbf{t}^u \cdot }\\
\mathcal{A}\left(\boldsymbol{\Sigma}_P(u)\right) \ar[r]^{\phi_u} & \operatorname{Gr}^{\mathcal{N}} A_{P}(u)
}\end{align}

\item
$\phi\left(\chi_{\rho}\right) = a_{\rho} \mathbf{t}^{v_{\rho}} \cdot \mathbf{t}^{u}$ for all $\rho \in \Sigma_P(u)(1)$.
\end{enumerate}
\end{lemma}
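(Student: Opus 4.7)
My plan is to work through the Stanley--Reisner presentation $\mathcal{A}(\boldsymbol{\Sigma}_P) \cong \operatorname{SR}[\Sigma_P]$ and match $K$-bases. Recall that $\mathcal{A}(\boldsymbol{\Sigma}_P)$ has a $K$-basis of ``standard monomials'' $\prod_\rho \chi_\rho^{k_\rho}$ indexed by elements $\sum k_\rho v_\rho \in P(\boldsymbol{\Sigma}_P)$ (the $\rho$'s with $k_\rho > 0$ being required to span a cone of $\Sigma_P$), and similarly for $\mathcal{A}(\boldsymbol{\Sigma}_P(u))$ with rays drawn from $\overline{\operatorname{Star}}(\sigma(u))(1)$. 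By the decomposition~(\ref{align:decomp-P_u}), $\operatorname{Gr}^{\mathcal{N}} A_{P}(u)$ has a corresponding $K$-basis $\{[\mathbf{t}^w] : w \in P_u(\boldsymbol{\Sigma}_P)\}$, graded by $\deg(w)$.

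First I will construct $\phi_0 = \phi$ as the ring homomorphism sending $\chi_\rho \mapsto a_\rho [\mathbf{t}^{v_\rho}]$. For well-definedness, if $\rho_1, \ldots, \rho_r$ fail to span a cone of $\Sigma_P$, I need $\prod a_{\rho_i} [\mathbf{t}^{\sum v_{\rho_i}}] = 0$ in $\operatorname{Gr}^{\mathcal{N}}_r A_{P}(0)$. Strict convexity of $\deg$ forces $\deg(\sum v_{\rho_i}) < r$, so either $\sum v_{\rho_i} \notin P_0(\boldsymbol{\Sigma}_P)$ (the representative sits outside the $A_{P}(0)$ summand of the decomposition~(\ref{decomposition-Gr-N})) or else it already lies in $\mathcal{N}_{<r} A_{P}(0)$; in both cases the class vanishes. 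Multiplicativity is checked the same way: products of in-cone rays behave additively on $\deg$, and off-cone products drop in $\deg$ and project to zero. That $\phi$ is a graded ring isomorphism then follows by matching the two bases, both indexed by $P(\boldsymbol{\Sigma}_P)$ via $\prod \chi_\rho^{k_\rho} \leftrightarrow \sum k_\rho v_\rho$.

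For general $u$, the key combinatorial fact is $P_u(\boldsymbol{\Sigma}_P) = u + P(\boldsymbol{\Sigma}_P(u))$: writing $w = u + \sum k_\rho v_\rho$, the uniqueness of the Box decomposition forces the rays $\rho$ appearing to lie in a cone $\tau$ such that $\sigma(u) \cup \tau(1)$ spans a cone, that is, $\tau \in \overline{\operatorname{Star}}(\sigma(u))$. I then define $\phi_u$ by the analogous formula $\prod \chi_\rho^{k_\rho} \mapsto \prod a_\rho^{k_\rho} [\mathbf{t}^{u + \sum k_\rho v_\rho}]$; the same strict-convexity argument, applied inside $\overline{\operatorname{Star}}(\sigma(u))$, gives well-definedness, and the basis bijection gives a graded $K$-linear isomorphism. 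The $\mathcal{A}(\boldsymbol{\Sigma}_P(u))$-module structure on $\operatorname{Gr}^{\mathcal{N}} A_{P}(u)$ comes from the graded multiplication on $\operatorname{Gr}^{\mathcal{N}} K[\mathbf{t}^{\pm 1}]$: the action of $\operatorname{Gr}^{\mathcal{N}} A_{P}(0) \cong \mathcal{A}(\boldsymbol{\Sigma}_P)$ preserves $\operatorname{Gr}^{\mathcal{N}} A_{P}(u)$ (since $\{0 + u\} = u$), and one verifies it factors through $i^*$ because generators $\chi_\rho$ with $\rho \notin \overline{\operatorname{Star}}(\sigma(u))(1)$ act as zero by the same $\deg$-argument. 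Under this structure $[\mathbf{t}^u] = \phi_u(1)$ is manifestly a free generator of rank $1$.

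Finally, diagram~(\ref{align:fin}) and the ray formula in item (3) reduce to a check on generators $\chi_\rho \in \mathcal{A}(\boldsymbol{\Sigma}_P)$: when $\rho \in \overline{\operatorname{Star}}(\sigma(u))(1)$, both paths yield $a_\rho [\mathbf{t}^{v_\rho + u}]$; otherwise $i^*(\chi_\rho) = 0$, while on the other path $v_\rho$ and $\sigma(u)$ fail to span a cone, so $\{v_\rho + u\} \neq u$ and $[\mathbf{t}^{v_\rho + u}] = 0$ in $A_{P}(u)$. I expect the main obstacle to be the careful bookkeeping in $\operatorname{Gr}^{\mathcal{N}}$: at every step I must separate the two distinct vanishing mechanisms --- landing in the wrong $A_{P}(u')$ summand of~(\ref{decomposition-Gr-N}) versus dropping below the relevant Newton degree --- and show that together they cut out exactly the Stanley--Reisner relations. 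The essential geometric input throughout is the strict convexity of $\deg = \deg_P$.
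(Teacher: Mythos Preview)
Your proposal is correct and follows essentially the same approach as the paper: both use the Stanley--Reisner presentation $\mathcal{A}(\boldsymbol{\Sigma}_P(u)) \cong \operatorname{SR}[\boldsymbol{\Sigma}_P(u)]$, send $\chi_\rho \mapsto a_\rho[\mathbf{t}^{v_\rho}]\cdot[\mathbf{t}^u]$, use strict convexity of $\deg_P$ to kill the Stanley--Reisner relations, and then match bases via the unique decomposition of elements of $P_u(\boldsymbol{\Sigma}_P)$. The paper streamlines your ``two vanishing mechanisms'' into a single step by working in the full graded ring $\operatorname{Gr}^{\mathcal{N}} K[\mathbf{t}^{\pm 1}]$ and recording the multiplication rule $\mathbf{t}^{u_1}\cdots\mathbf{t}^{u_k} = \mathbf{t}^{u_1+\cdots+u_k}$ if the $u_i$ are cofacial and $0$ otherwise; once you have that formula, the image automatically lands in $\operatorname{Gr}^{\mathcal{N}} A_P(u)$ and your case split becomes unnecessary.
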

\begin{proof}
Consider $\operatorname{Gr}^{\mathcal{N}} K\left[\mathbf{t}^{ \pm 1}\right]$.
As $\deg_P$ is strictly convex, we have $\mathcal{N}_{\alpha_1} \cdot \mathcal{N}_{\alpha_2} \subset \mathcal{N}_{\alpha_1 + \alpha_2}$.
Hence $\operatorname{Gr}^{\mathcal{N}} K\left[\mathbf{t}^{ \pm 1}\right]$ has a graded $K$-algebra structure.
We have
\begin{align}\label{align:mult-gr-jaco}
\mathbf{t}^{u_1} \cdots \mathbf{t}^{u_k} = \begin{cases}
\mathbf{t}^{u_1 + \cdots + u_k}, & u_1, \cdots,  u_k \text{ are cofacial},\\
0, & \text{otherwise},
\end{cases}
\end{align}
in $\operatorname{Gr}^{\mathcal{N}} K\left[\mathbf{t}^{ \pm 1}\right]$.
Consider the map
$$\begin{aligned}
\tilde{\phi}_u : K[x_{\rho}]_{\rho \in \boldsymbol{\Sigma}_P(u)(1)} &\to \operatorname{Gr}^{\mathcal{N}} K[\mathbf{t}^{ \pm 1}]\\
\prod_{\rho} x_{\rho}^{n_{\rho}} &\mapsto \prod_{\rho} \left(a_{\rho} \mathbf{t}^{v_{\rho}}\right)^{n_{\rho}} \cdot \mathbf{t}^{u}
\end{aligned}$$
For any monomial $x_{\rho_1}^{n_1} \cdots x_{\rho_k}^{n_k}$, where $n_i \geq 1$,
by (\ref{align:mult-gr-jaco}), 
we know that $\tilde{\phi}_u\left(x_{\rho_1}^{n_1} \cdots x_{\rho_k}^{n_k}\right) \neq 0$ if and only if $\rho_1, \cdots, \rho_k$ are cofacial.
Therefore, $\tilde{\phi}_u$ factor through the Stanley-Reisner ring  $\operatorname{SR}\left[\boldsymbol{\Sigma}_P(u)\right]$ and we get $\phi_u : \operatorname{SR}\left[\boldsymbol{\Sigma}_P(u)\right] \to \operatorname{Gr}^{\mathcal{N}} K[\mathbf{t}^{ \pm 1}]$.

Notice that for any element in $P_u(\boldsymbol{\Sigma})$, there exists one and only one way to write it in the form 
$u + n_{1} v_{\rho_1} + \cdots + n_{r} v_{\rho_r}$, 
where $\rho_1, \cdots, \rho_r \in \boldsymbol{\Sigma}_P(u)(1)$ are cofacial and $n_i \in \mathbb{Z}_{> 0}$.
Therefore, 
$\phi_u$ is injective and $\operatorname{im} \phi_u = \operatorname{Gr}^{\mathcal{N}} A_{P}(u)$.
We then use the fact that $\operatorname{SR}\left[\boldsymbol{\Sigma}_P(u)\right] \cong \mathcal{A}\left(\boldsymbol{\Sigma}_P(u)\right)$ to get an isomorphism $\phi_u : \mathcal{A}\left(\boldsymbol{\Sigma}_P(u)\right) \to \operatorname{Gr}^{\mathcal{N}} A_{P}(u)$.
\end{proof}

Let
$$\begin{aligned}
\overline{\Omega}^{k}_{\mathcal{N}}(u) 
&\defeq
\bigoplus_{i_1 < \cdots < i_k} \operatorname{Gr}^{\mathcal{N}}A_P(u) \cdot \frac{\mathrm{d} t_{i_1}}{t_{i_1}} \wedge \cdots \wedge \frac{\mathrm{d} t_{i_k}}{t_{i_k}} \subset \operatorname{Gr}^{\mathcal{N}}\Omega^{\bullet}(U),\\
\Omega^{k}_{\mathcal{N}}(u) 
&\defeq
\bigoplus_i \bigoplus_{i_1 < \cdots < i_k} \tau^i \operatorname{Gr}^{\mathcal{N}}_{\{\deg(u)\}+i+k-n} A_P(u) \cdot \frac{\mathrm{d} t_{i_1}}{t_{i_1}} \wedge \cdots \wedge \frac{\mathrm{d} t_{i_k}}{t_{i_k}}\\
&\subset \operatorname{Gr}^{\mathcal{N}}\Omega^{\bullet}(U)\left[\tau^{\pm 1}\right],
\end{aligned}$$
for any $u \in \operatorname{Box}(\boldsymbol{\Sigma}_P)$.
Then we have 
$$\begin{aligned}
\operatorname{Gr}^{\mathcal{N}}\Omega^{\bullet}(U) &= 
\bigoplus_{u \in \operatorname{Box}(\boldsymbol{\Sigma}_P)} \overline{\Omega}^{k}_{\mathcal{N}}(u),\\
\operatorname{Gr}^{\mathcal{N}}_{\alpha}\Omega^{\bullet}(U)\left[\tau^{\pm 1}\right] &= 
\bigoplus_{\substack{u \in \operatorname{Box}(\boldsymbol{\Sigma}_P)\\\{\deg(u)\} = \alpha}} \Omega^{k}_{\mathcal{N}}(u) , \quad 0 \leq \alpha < 1.
\end{aligned}$$
Note that the operator $\theta \mathrm{d} - \mathrm{d} f \wedge $ (resp. $- \mathrm{d} f \wedge $) preserves the above decomposition. 
So we have well-defined complexes
$$
K_{\mathcal{N}}(u) = \left(\overline{\Omega}^{\bullet}_{\mathcal{N}}(u), - \mathrm{d} f \wedge \right) \subset \operatorname{Gr}^{\mathcal{N}}K(f)
$$ 
and
$$
\Omega_{\mathcal{N}}(u) = \left(\Omega^{\bullet}_{\mathcal{N}}(u), \theta \mathrm{d} - \mathrm{d} f \wedge \right) \subset \operatorname{Gr}^{\mathcal{N}}\Omega(f),
$$
Let
$$\begin{aligned}
J_f^{\mathcal{N}}(u) &= H^n\left(K_{\mathcal{N}}(u)\right),\\
H(u) &= H^n\left(\Omega_{\mathcal{N}}(u)\right).
\end{aligned}$$
We have the linear transformation $f$ on $J_f^{\mathcal{N}}(u)$ and the linear transformation $N$ on $H(u)$.
We have
\begin{align}
\label{align:decomposition-H}
H_{\alpha} &= \bigoplus_{\substack{u \in \operatorname{Box}(\boldsymbol{\Sigma}_P)\\\{\deg(u)\} = \alpha}} H(u), \quad 0 \leq \alpha < 1\\
\label{align:decomposition-jac}
\operatorname{Gr}^{\mathcal{N}} J_{f} &= \bigoplus_{u \in \operatorname{Box}(\boldsymbol{\Sigma}_P)} J_{f}^{\mathcal{N}}(u).
\end{align}

\begin{proposition}
\label{prop:jac-SR-rel}
$J^{\mathcal{N}}_{P}(u)$ is a free $H(\Sigma_P(u))$-module with basis $\mathbf{t}^u$. (For the definition of $H(\Sigma_P(u))$, see Definition \ref{def:conewise_polynomial}.)
The action of $f \in J^{\mathcal{N}}_{P}$ on $J^{\mathcal{N}}_{P}(u)$ corresponds to the action of $\deg_P \in H(\Sigma_P)$ on $H(\Sigma_P(u))$. (For the definition of $\deg_P$, see (\ref{def_deg}).)
\end{proposition}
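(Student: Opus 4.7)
The plan is to compute $J_f^{\mathcal{N}}(u) = H^n(K_{\mathcal{N}}(u))$ directly from its Koszul presentation and then transport the answer across the isomorphism $\phi_u \colon \mathcal{A}(\boldsymbol{\Sigma}_P(u)) \xrightarrow{\sim} \operatorname{Gr}^{\mathcal{N}} A_P(u)$ established in the previous lemma.

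First, I would unwind the top differential of $K_{\mathcal{N}}(u)$. Writing every form in the logarithmic frame gives $\mathrm{d}f = \sum_i \bigl(t_i\,\partial f/\partial t_i\bigr)\,\mathrm{d}t_i/t_i$, where each coefficient $t_i\,\partial f/\partial t_i = \sum_\rho v_{\rho,i}\,a_\rho \mathbf{t}^{v_\rho}$ lies in $\mathcal{N}_1 A_P(0)$ because $\deg_P(v_\rho)=1$. The top Koszul differential of $K_{\mathcal{N}}(u)$ therefore becomes $\bigoplus_i \operatorname{Gr}^{\mathcal{N}} A_P(u)\cdot\omega_i \to \operatorname{Gr}^{\mathcal{N}} A_P(u)\cdot\omega$, acting on the $i$-th summand (up to a sign) by multiplication by the class $[t_i\,\partial f/\partial t_i]\in \operatorname{Gr}_1^{\mathcal{N}} A_P(0)$. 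Hence
\[
J_f^{\mathcal{N}}(u) \;\cong\; \operatorname{Gr}^{\mathcal{N}} A_P(u)\Big/\sum_i [t_i\,\partial f/\partial t_i]\cdot \operatorname{Gr}^{\mathcal{N}} A_P(u).
\]

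Next, I would identify the classes $[t_i\,\partial f/\partial t_i]$ on the combinatorial side. Since $\phi(\chi_\rho)=a_\rho\mathbf{t}^{v_\rho}$, we have $\phi\bigl(\sum_\rho v_{\rho,i}\chi_\rho\bigr)=t_i\,\partial f/\partial t_i$; but $\sum_\rho v_{\rho,i}\chi_\rho$ is precisely the global linear coordinate $t_i\in\operatorname{hom}_{\mathbb{Z}}(N,\mathbb{Z})$ regarded as a conewise linear function on $\Sigma_P$, because a conewise linear function on a simplicial fan is pinned down by its ray values, and both sides take value $v_{\rho',i}$ at each $v_{\rho'}$. Using the commutative square (\ref{align:fin}), multiplication by $[t_i\,\partial f/\partial t_i]$ on $\operatorname{Gr}^{\mathcal{N}} A_P(u)$ corresponds via $\phi_u$ to multiplication by $t_i\in\mathbf{m}$ on $\mathcal{A}(\boldsymbol{\Sigma}_P(u))$. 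Assembling the identifications yields
\[
J_f^{\mathcal{N}}(u) \;\cong\; \mathcal{A}(\boldsymbol{\Sigma}_P(u))\big/\mathbf{m}\,\mathcal{A}(\boldsymbol{\Sigma}_P(u)) \;=\; H(\boldsymbol{\Sigma}_P(u)),
\]
under which the unit $1\in H(\boldsymbol{\Sigma}_P(u))$ corresponds to $\phi_u(1)=\mathbf{t}^u$; this proves freeness of rank one with basis $\mathbf{t}^u$. For the action of $f$: one has $[f]=\sum_\rho a_\rho\mathbf{t}^{v_\rho}$ in $\operatorname{Gr}_1^{\mathcal{N}} A_P(0)$, so $\phi^{-1}([f])=\sum_\rho\chi_\rho$, and $\sum_\rho\chi_\rho=\deg_P$ in $\mathcal{A}^1(\boldsymbol{\Sigma}_P)$ because both are conewise linear and take the value $1$ on every ray generator $v_\rho$ (this uses $v_\rho\in\partial P$). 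Applying (\ref{align:fin}) once more delivers the stated compatibility.

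The only point demanding genuine care is the first step: one must check that on the associated graded the differential $-\mathrm{d}f\wedge$ really is just multiplication by the classes $[t_i\,\partial f/\partial t_i]$, with no lower-order corrections sneaking in. Since each $t_i\,\partial f/\partial t_i$ already lives in $\mathcal{N}_1$ exactly (rather than having a smaller-order remainder), this is clean, and the strictness statement of Lemma \ref{cor:strict} ensures that no spurious relations are created in passing to $\operatorname{Gr}^{\mathcal{N}}$; everything else is algebraic bookkeeping.
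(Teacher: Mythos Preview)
Your proof is correct and follows essentially the same route as the paper: both arguments identify $J_f^{\mathcal{N}}(u)$ with $\operatorname{Gr}^{\mathcal{N}} A_P(u)$ modulo the ideal generated by the $t_i\,\partial f/\partial t_i$, verify via the ray-value characterization of conewise linear functions that $\phi(t_i)=t_i\,\partial f/\partial t_i$ and $\phi(\deg_P)=f$, and then invoke the commutative square (\ref{align:fin}) to transport the quotient to $H(\boldsymbol{\Sigma}_P(u))$. Your extra remarks on strictness and lower-order terms are harmless but not needed, since the $t_i\,\partial f/\partial t_i$ are homogeneous of Newton degree exactly $1$.
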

\begin{proof}
For any
$m 
\in \left(N_{\mathbb{Q}}\right)^{\vee} 
= \mathfrak{m}_1 \subset K[\mathbf{t}]$, we have
$$
m = \sum_{\rho \in \boldsymbol{\Sigma}_P(1)} m(v_{\rho}) \chi_{\rho}
$$
in $\mathcal{A}(\Sigma_P)$.
Hence 
$$
\phi(m) 
= \phi\left(\sum_{\rho \in \boldsymbol{\Sigma}_P(1)} m(v_{\rho}) \chi_{\rho}\right) 
= \sum_{v \in P(0)} m(v) a_v t^v.
$$
Therefore
$$\begin{gathered}
\phi(t_i) 
= \sum_{v \in P(0)} v_i a_v t^v 
= t_i\frac{\partial f}{\partial t_i},\\
\phi(\deg_P) 
= \sum_v a_v t^v 
= f.
\end{gathered}$$
Hence (\ref{align:fin}) induces an isomorphism
$$\begin{aligned}
&H\left(\Sigma_P(u)\right) 
\cong \mathcal{A}\left(\Sigma_P(u)\right)/\left(t_1, \cdots, t_n\right)\mathcal{A}\left(\Sigma_P(u)\right) \\
\rightarrow
&J^{\mathcal{N}}_{P}(u) 
\cong \left.\operatorname{Gr}^{\mathcal{N}}A_{P}(u)\middle/\left(t_1\frac{\partial f}{\partial t_1}, \cdots, t_n\frac{\partial f}{\partial t_n}\right)\operatorname{Gr}^{\mathcal{N}}A_{P}(u).\right.
\end{aligned}$$
And the action of $f$ corresponds to the action of $\deg_P$.
\end{proof}

\begin{corollary}
\label{lemma:van-jac-rel-u}
Let $K=\mathbb{C}$. For any $u \in \operatorname{Box}(\boldsymbol{\Sigma}_P)$,
\begin{enumerate}
\item 
we have a (non-canonical) isomorphism
$$
\left(H(u), N\right) 
\xrightarrow{\sim}
\left(\operatorname{Gr}^{\Phi} H(u), N\right),
$$

\item
we have canonical isomorphisms
$$
\left(\operatorname{Gr}^{\Phi} H(u), N\right) 
\xrightarrow{\sim}
\left(J_f^{\mathcal{N}}(u), [f]\right)
\xrightarrow{\sim}
\left(H\left(\Sigma_P(u)\right), \deg_P\right).
$$
Under these isomorphisms we have
$$
\operatorname{Gr}^{\Phi}_p H(u)
\xrightarrow{\sim}
\left(J_f^{\mathcal{N}}(u)\right)_{p + \{\deg(u)\}}
\xrightarrow{\sim}
H^{p - \lfloor\deg(u)\rfloor}\left(\Sigma_P(u)\right).
$$
\end{enumerate}    
\end{corollary}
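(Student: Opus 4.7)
The plan is to reduce the statement to two results already in hand: part (1) to Lemma \ref{lemma:van-jac-rel} applied to $(H(u),N,\Phi_\bullet)$, and part (2) to the combination of the global identification (\ref{align:global-H-J-equal}) with Proposition \ref{prop:jac-SR-rel}, restricted to the $u$-summand. The main thing to verify is that everything respects the refined decomposition indexed by $u \in \operatorname{Box}(\boldsymbol{\Sigma}_P)$, after which the corollary is a matter of bookkeeping on the degree shift between $\mathbb{Q}$-degrees on the Jacobian side and $\mathbb{Z}$-degrees on the Stanley--Reisner side.

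For (1), I would first record that the decomposition $H_\alpha = \bigoplus_{u} H(u)$ in (\ref{align:decomposition-H}) is compatible both with $\Phi_\bullet$ (since $\Omega_{\mathcal{N}}(u)$ is cut out from $\operatorname{Gr}_\alpha^{\mathcal{N}}\Omega(f)$ by the $\tau$-graded summands of the fixed piece $A_P(u)$, and $\Phi_\bullet$ is defined via the $\tau$-filtration) and with $N$ (since by (\ref{align:mult-gr-jaco}), multiplication by $f = \sum_{v\in P(0)} a_v\mathbf{t}^v$ in $\operatorname{Gr}^{\mathcal{N}}$ sends $\mathbf{t}^w$ to $\sum_{v\,\text{cofacial with }w} a_v\mathbf{t}^{w+v}$, and each summand of $P(0)$ lies in $P(\boldsymbol{\Sigma}_P)$, so $\{w+v\}=\{w\}$ whenever the product is nonzero). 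The strictness of $N^j : (H_\alpha,F^\bullet)\to(H_\alpha,F^{\bullet-j})$ cited above therefore descends to the direct summand $H(u)$, and Lemma \ref{lemma:van-jac-rel} yields the non-canonical isomorphism $(H(u),N) \xrightarrow{\sim}(\operatorname{Gr}^{\Phi} H(u), N)$.

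For (2), the first isomorphism is obtained by restricting (\ref{align:global-H-J-equal}) to the $u$-summand: setting $\alpha=\{\deg(u)\}$, the compatibility of the two decompositions (\ref{align:decomposition-H}) and (\ref{align:decomposition-jac}) with the $\Phi$-graded isomorphism produced in the proof of Lemma \ref{lemma:exact-gr-N} gives a canonical identification
\[
\operatorname{Gr}^{\Phi}_p H(u) \xrightarrow{\sim} \bigl(J_f^{\mathcal{N}}(u)\bigr)_{p+\{\deg(u)\}},
\]
intertwining $N$ with multiplication by $[f]$. The second isomorphism is Proposition \ref{prop:jac-SR-rel}: the module $J_f^{\mathcal{N}}(u)$ is free of rank one over $H(\Sigma_P(u))$ with basis $\mathbf{t}^u$, and the action of $[f]$ matches the action of $\deg_P$. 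The claimed degree correspondence then follows from the observation that $\mathbf{t}^u$ has Newton degree $\deg_P(u)=\lfloor\deg(u)\rfloor+\{\deg(u)\}$, so that multiplication by $\mathbf{t}^u$ sends $H^k(\Sigma_P(u))$ onto $\bigl(J_f^{\mathcal{N}}(u)\bigr)_{k+\deg(u)}$; substituting $k=p-\lfloor\deg(u)\rfloor$ yields the stated formula.

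The only real delicacy will be in keeping the degree shifts straight: one must remember that the Newton filtration sees the full $\deg_P(u)$, not just its fractional part, so the $p$-shift on the $\Phi$-filtration side is offset by $\lfloor\deg(u)\rfloor$ on the Stanley--Reisner side. Once this is recorded once and for all at the level of the basis element $\mathbf{t}^u$, the rest is formal, and no further nondegeneracy argument or deformation is required.
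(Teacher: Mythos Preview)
Your proposal is correct and follows essentially the same route as the paper: for (1) you descend the strictness of $N^j$ from $H_\alpha$ to the direct summand $H(u)$ and invoke Lemma \ref{lemma:van-jac-rel}, and for (2) you combine the $u$-component of (\ref{align:global-H-J-equal}) with Proposition \ref{prop:jac-SR-rel}. The only cosmetic difference is that the paper re-runs the spectral sequence argument for $\Omega_{\mathcal{N}}(u)$ directly (after noting $H^i(K_{\mathcal{N}}(u))=0$ for $i\neq n$ follows from the global vanishing), whereas you restrict the already-established global isomorphism; both are valid and your degree-shift bookkeeping via the basis element $\mathbf{t}^u$ is exactly what is needed.
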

\begin{proof}
\begin{enumerate}
\item 
Note that for homomophisms of filtered modules $f : (A_1, F) \to (B_1, F)$, $g : (A_2, F) \to (B_2, F)$, $f$ and $g$ are strict if and only if $f \oplus g$ is strict. 
Hence by the fact that $N^j : (H, G_{\bullet}) \to (H, G_{\bullet})[j]$ is strict, we know that $N^j : (H(u), G_{\bullet}) \to (H(u), G_{\bullet})[j]$ is strict for any $j$.
Therefore, by Lemma \ref{lemma:van-jac-rel}, we have a non-canonical isomorphism
$$
\left(H(u), N\right) 
\xrightarrow{\sim}
\left(\operatorname{Gr}^{\Phi} H(u), N\right).
$$

\item
By the fact that $H^i\left(\operatorname{Gr}^{\mathcal{N}} K(f)\right) = 0$, we know that $H^i\left(K_{\mathcal{N}}(u)\right) = 0$ for all $i \neq n$.
Hence, by the same proof of (\ref{align:gr-G-on-grade-N-G}) and (\ref{align:global-H-J-equal}),
the spectral sequence associated to $\left(\Omega_{\mathcal{N}}(u), {\Phi}_{\bullet}\right)$ gives an isomorphism
$$
\left(\operatorname{Gr}^{\Phi} H(u), N\right) 
\xrightarrow{\sim}
\left(J_f^{\mathcal{N}}(u), [f]\right).
$$
By Proposition \ref{prop:jac-SR-rel}, we have $H(\Sigma_P(u)) \xrightarrow{\sim} J^{\mathcal{N}}_{P}(u)$.
\end{enumerate}
\end{proof}

For any $u \in \operatorname{Box}(\boldsymbol{\Sigma})$, denote $\sigma = \sigma(u)$ and 
$u^{-1} = \sum_{\rho \in \sigma(1)} v_{\rho} - u$. 
Notice that $\sigma(u^{-1}) = \sigma(u) = \sigma$ and $\deg(u) + \deg(u^{-1}) = \dim \sigma$.
We have $\lfloor\deg(u)\rfloor + \lfloor\deg(u^{-1})\rfloor = \dim \sigma + \nu - n$, where $\nu$ is defined in Definition \ref{def:vanishing-cycle}.
Note that $\operatorname{Box}(\sigma)$ is in one to one correspondence with $N(\sigma)$ and if we view $u$ and $u^{-1}$ as elements in $N(\sigma)$, then they are inverse to each other.

\begin{corollary}
\label{cor:Birkhoff problem}
Let $K=\mathbb{C}$. 
\begin{enumerate}[(1)]
\item 
We have 
$$
(H, F^{\bullet}, N) \cong \bigoplus_{u \in \operatorname{Box}(\boldsymbol{\Sigma}_P)} \left(H\left(\Sigma_P(u)\right), F^{\bullet - \lfloor\deg(u^{-1})\rfloor}, \deg_P\right).
$$

\item
$F^{\bullet + \lfloor\deg(u^{-1})\rfloor}$ and $M(N)_{2 \bullet - \operatorname{codim} \sigma}$ are opposite filtrations on $H(u)$, i.e.
$$\begin{aligned}
H(u) &= M(N)_{2k - \operatorname{codim} \sigma} \oplus F^{k + \lfloor\deg(u^{-1})\rfloor + 1} H(u)
\end{aligned}$$
for all $k$.
\end{enumerate}
\end{corollary}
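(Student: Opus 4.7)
The plan is to assemble the isomorphism in (1) out of the decomposition (\ref{align:decomposition-H}) and the two isomorphisms supplied by Corollary \ref{lemma:van-jac-rel-u}. By (\ref{align:decomposition-H}) one has $H = \bigoplus_{u \in \operatorname{Box}(\boldsymbol{\Sigma}_P)} H(u)$. For each $u$, Corollary \ref{lemma:van-jac-rel-u}(1) provides a (non-canonical) linear isomorphism $H(u) \xrightarrow{\sim} \operatorname{Gr}^{\Phi} H(u)$ intertwining $N$ on both sides, while part (2) of the same corollary canonically identifies $(\operatorname{Gr}^{\Phi} H(u), N)$ with $(H(\Sigma_P(u)), \deg_P)$ under the grading match $\operatorname{Gr}^{\Phi}_p H(u) \cong H^{p-\lfloor\deg(u)\rfloor}(\Sigma_P(u))$. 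Composing these gives $(H(u), N) \cong (H(\Sigma_P(u)), \deg_P)$; what remains is to track the Hodge filtrations through the composition.

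The splitting furnished by Lemma \ref{lemma:van-jac-rel} is compatible with $\Phi_\bullet$, in the sense that the isomorphism $H(u) \xrightarrow{\sim} \operatorname{Gr}^{\Phi} H(u)$ sends $\Phi_p H(u)$ to $\bigoplus_{q \leq p} \operatorname{Gr}^{\Phi}_q H(u)$. Therefore $F^k H(u) = \Phi_{\nu-k} H(u)$ is sent to $\bigoplus_{j \leq \nu - k - \lfloor\deg(u)\rfloor} H^j(\Sigma_P(u))$. On the other hand, the Hodge filtration of Theorem \ref{thm:PMHS-SR-ring}, applied to the weight-$\operatorname{codim}\sigma$ PMHS on $H(\Sigma_P(u))$ coming from Corollary \ref{lemma:star-quotient-isom}, reads $F^{k'} H(\Sigma_P(u)) = \bigoplus_{j \leq \operatorname{codim}\sigma - k'} H^j(\Sigma_P(u))$. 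Solving $\operatorname{codim}\sigma - k' = \nu - k - \lfloor\deg(u)\rfloor$ and using the identity $\lfloor\deg(u)\rfloor + \lfloor\deg(u^{-1})\rfloor = \dim\sigma + \nu - n = \nu - \operatorname{codim}\sigma$ recalled just before the corollary gives $k' = k - \lfloor\deg(u^{-1})\rfloor$. Thus $F^k H(u) \cong F^{k - \lfloor\deg(u^{-1})\rfloor} H(\Sigma_P(u))$, which is precisely the shifted filtration in the statement of (1).

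For (2), Corollary \ref{lemma:star-quotient-isom} asserts that $(H(\Sigma_P(u)), W_\bullet, F^\bullet)$ is a polarized mixed Hodge structure of Hodge-Tate type of weight $\operatorname{codim}\sigma$, so $H(\Sigma_P(u)) = W_{2k} \oplus F^{k+1}$ for every $k$. Under the isomorphism from (1) the nilpotent $N$ corresponds to multiplication by $\deg_P$, hence the monodromy filtration $M(N)$ corresponds to $M(\deg_P)$; since $W_\bullet = M(\deg_P)_{\bullet - \operatorname{codim}\sigma}$, this yields $M(N)_{2k - \operatorname{codim}\sigma} H(u) \cong W_{2k} H(\Sigma_P(u))$. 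Combining with the filtration identification established in (1) transports the Hodge-Tate splitting $W_{2k} \oplus F^{k+1} = H(\Sigma_P(u))$ to the desired opposition $M(N)_{2k - \operatorname{codim}\sigma} \oplus F^{k + \lfloor\deg(u^{-1})\rfloor + 1} = H(u)$.

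The main technical nuisance is bookkeeping: three indexings (cohomological degree on $H(\Sigma_P(u))$, the $\Phi$-grading on $H(u)$, and the Hodge index $F^{\nu-\bullet}$) have to be reconciled for every cone $\sigma$, with the arithmetic identity $\lfloor\deg(u)\rfloor + \lfloor\deg(u^{-1})\rfloor = \nu - \operatorname{codim}\sigma$ doing the crucial work. Apart from these index manipulations, both parts reduce mechanically to the Hodge-Tate PMHS statement of Theorem \ref{thm:PMHS-SR-ring}.
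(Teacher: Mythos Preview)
Your proof is correct and follows essentially the same route as the paper: both arguments compose the splitting from Corollary~\ref{lemma:van-jac-rel-u}(1) with the canonical identification of Corollary~\ref{lemma:van-jac-rel-u}(2), then match the Hodge filtrations via the arithmetic identity $\lfloor\deg(u)\rfloor + \lfloor\deg(u^{-1})\rfloor = \nu - \operatorname{codim}\sigma$, and for (2) transport the Hodge--Tate splitting $W_{2k}\oplus F^{k+1}$ from Corollary~\ref{lemma:star-quotient-isom} back to $H(u)$. Your write-up is simply more explicit about the index bookkeeping than the paper's.
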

\begin{proof}
\begin{enumerate}[(1)]
\item 
By Corollary \ref{lemma:van-jac-rel-u}, we have an isomorphism
$$
\left(H(u), N\right) 
\xrightarrow{\sim}
\left(H\left(\Sigma_P(u)\right), \deg_P\right),
$$
such that the image of $F^kH(u)$ is
$$
\bigoplus_{p \leq \nu - k} H^{p - \lfloor\deg(u)\rfloor}\left(\Sigma_P(u)\right)
= F^{k - \lfloor\deg(u^{-1})\rfloor}H\left(\Sigma_P(u)\right)
$$

\item
By Corollary \ref{lemma:star-quotient-isom}, we have
$$
H\left(\Sigma_P(u)\right) = M(\deg_P)_{2k-\operatorname{codim} \sigma} 
\oplus F^{k+1} H\left(\Sigma_P(u)\right).
$$
Therefore we know that 
$$\begin{aligned}
H\left(u\right) 
&= M(N)_{2k - \operatorname{codim} \sigma} \oplus F^{k + 1 + \lfloor\deg(u^{-1})\rfloor} H(u).
\end{aligned}$$
\end{enumerate}
\end{proof}

\begin{definition}
Let $A_{pq} = (A, F_{p,q}^{\bullet}, (-1)^q Q)$ be the following polarized Hodge structure of weight $p+q$, ($p,q \in \mathbb{Z}$):
\begin{itemize}
\item 
$A = \mathbb{Z} e_1 \oplus \mathbb{Z} e_2$ is the free $\mathbb{Z}$-module of rank $2$.

\item
$F_{p,q}^{\bullet} \defeq F_p^{\bullet}(\mathbb{C} z) \oplus \overline{F_q^{\bullet}(\mathbb{C} z)}$ is a decreasing filtration on 
$$
A_{\mathbb{C}} = \mathbb{C} e_1 \oplus \mathbb{C} e_2 = \mathbb{C} z \oplus \mathbb{C} \overline{z}, \quad z = e_1 + \mathrm{i} e_2,
$$
where
$$
F_p^k(\mathbb{C} z) = \begin{cases}
\mathbb{C} z, & k \leq p,\\
0, & p+1 \leq k,
\end{cases}
$$
for any $p \in \mathbb{Z}$.

\item
$Q$ is the bilinear form on $A_{\mathbb{Q}}$ such that the matrix of $Q$ with respect to the basis $\{e_1, e_2\}$ is $\begin{pmatrix}
1 \\
& 1
\end{pmatrix}$ if $p+q$ is even, and is 
$\begin{pmatrix}
& 1 \\
-1
\end{pmatrix}$
if $p+q$ is odd.
\end{itemize}
\end{definition}

\begin{proposition}\label{prop:PMHS-on-J}
Let $K = \mathbb{C}$.
For $u \in \operatorname{Box}(\boldsymbol{\Sigma}_P)$, 
\begin{enumerate}[(a)]
\item 
let 
$$
\nu = \begin{cases}
n, & \deg(u) \in \mathbb{Z},\\
n-1, & \deg(u) \notin \mathbb{Z},\\
\end{cases}
$$

\item 
let $F^{p} = \mathcal{N}_{n - p} = \oplus_{i \leq \nu - p} \left(J^{\mathcal{N}}_{f}(u)\right)_{i + \{\deg(u)\}}$ be the decreasing filtration on $J^{\mathcal{N}}_{f}(u)$ induced by the Newton filtration,

\item
and let $W_{\bullet} = M(f)_{\bullet - \nu}$, where $M(f)_{\bullet}$ is the monodromy filtration of $f$  on $J^{\mathcal{N}}_{f}(u)$.
\end{enumerate}
Let $\sigma = \sigma(u)$ and let $\boldsymbol{\Sigma}_P(\sigma)$ be defined as in Definition \ref{def:quotient-stacky-fan}(c).
Then
\begin{enumerate}[(1)]
\item
The isomorphism $J^{\mathcal{N}}_{f}(u) \cong H(u)$ in Corollary \ref{lemma:van-jac-rel-u} is compatible with $F^{\bullet}$ and $W_{\bullet}$.

\item 
\begin{enumerate}
\item 
if $u = u^{-1}$, 
then $\left(J^{\mathcal{N}}_{f}(u), F^{\bullet}, W_{\bullet}\right)$ underlies a polarized mixed Hodge structure with weight $\nu$ which is isomorphic to the polarized mixed Hodge structure on $H(\boldsymbol{\Sigma}_P(\sigma))\left(\lfloor \deg(u) \rfloor\right)$, where $\mathfrak{A}(k)$ is the $k$-th Tate twist of a polarized mixed Hodge structure $\mathfrak{A}$.

\item
if $u \neq u^{-1}$, 
then $\left(J^{\mathcal{N}}_{f}(u) \oplus J^{\mathcal{N}}_{f}(u^{-1}), F^{\bullet}, W_{\bullet}\right)$  underlies a polarized mixed Hodge structure  with weight $\nu$  which is isomorphic to the polarized mixed Hodge structure on $H(\boldsymbol{\Sigma}_P(\sigma)) \otimes A_{\lfloor \deg(u^{-1}) \rfloor, \lfloor \deg(u) \rfloor}$.
\end{enumerate}
\end{enumerate}
\end{proposition}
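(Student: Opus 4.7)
The plan is to reduce the entire proposition to the polarized mixed Hodge structure on $H(\boldsymbol{\Sigma}_P(\sigma))$ supplied by Corollary \ref{lemma:star-quotient-isom}, via the canonical chain
$$
(H(u), N) \;\cong\; (\operatorname{Gr}^{\Phi}H(u), N) \;\cong\; (J_f^{\mathcal{N}}(u), [f]) \;\cong\; (H(\boldsymbol{\Sigma}_P(\sigma)), \deg_P)
$$
provided by Corollary \ref{lemma:van-jac-rel-u} and Proposition \ref{prop:jac-SR-rel}. Once these identifications are in place, both parts become essentially bookkeeping of filtration index shifts; the one genuinely delicate point is the $\mathbb{Q}$-structure claim in case (2b).

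For part (1), I would factor the isomorphism of Corollary \ref{lemma:van-jac-rel-u} as $(H(u), N) \xrightarrow{\sim} (\operatorname{Gr}^{\Phi}H(u), N) \xrightarrow{\sim} (J_f^{\mathcal{N}}(u), [f])$. The first arrow, non-canonical and supplied by Lemma \ref{lemma:van-jac-rel}, is a splitting that preserves $\Phi_{\bullet}$ and commutes with $N$, so it preserves $F^{\bullet} = \Phi_{\nu - \bullet}$ and the monodromy-based weight filtration $W_{\bullet} = M(N)_{\bullet - \nu}$. The second arrow is canonical: it identifies $\operatorname{Gr}^{\Phi}_p H(u)$ with $(J_f^{\mathcal{N}}(u))_{p + \{\deg(u)\}}$ and sends $N$ to $[f]$, so that $F^p H(u)$ maps to $\bigoplus_{i \leq \nu - p}(J_f^{\mathcal{N}}(u))_{i + \{\deg(u)\}} = F^p J_f^{\mathcal{N}}(u)$ and $W_\bullet$ maps to $M([f])_{\bullet - \nu}$, matching the definitions in (b), (c).

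For part (2), I would push the filtrations across the Jacobian/Stanley-Reisner isomorphism of Proposition \ref{prop:jac-SR-rel} using the grading identification $(J_f^{\mathcal{N}}(u))_{i + \{\deg(u)\}} \cong H^{i - \lfloor\deg(u)\rfloor}(\boldsymbol{\Sigma}_P(\sigma))$ from Corollary \ref{lemma:van-jac-rel-u}. Combining this with the filtrations of Theorem \ref{thm:PMHS-SR-ring} on $H(\boldsymbol{\Sigma}_P(\sigma))$ and the identity $\lfloor\deg(u)\rfloor + \lfloor\deg(u^{-1})\rfloor = \dim\sigma + \nu - n$, a direct computation shows that $F^p J_f^{\mathcal{N}}(u)$ corresponds to $F^{p - \lfloor\deg(u^{-1})\rfloor} H(\boldsymbol{\Sigma}_P(\sigma))$ and the weight filtration on $J_f^{\mathcal{N}}(u)$ shifts by $\lfloor\deg(u)\rfloor + \lfloor\deg(u^{-1})\rfloor$ relative to that on $H(\boldsymbol{\Sigma}_P(\sigma))$. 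In case (2a), $u = u^{-1}$ gives $\lfloor\deg(u)\rfloor = \lfloor\deg(u^{-1})\rfloor$ and this is precisely the Tate twist by $\lfloor\deg(u)\rfloor$, producing a PMHS of weight $(n - \dim\sigma) + 2\lfloor\deg(u)\rfloor = \nu$. In case (2b), the analogous calculation on each summand shows that $J_f^{\mathcal{N}}(u) \oplus J_f^{\mathcal{N}}(u^{-1})$, as a filtered complex vector space, agrees with $H(\boldsymbol{\Sigma}_P(\sigma)) \otimes A_{\lfloor\deg(u^{-1})\rfloor, \lfloor\deg(u)\rfloor}$: tensoring with the two one-dimensional complex components $\mathbb{C} z$ and $\mathbb{C}\overline{z}$ of $A_{pq}$ produces the two different Hodge-filtration shifts on the two summands, and the total weight becomes $(n-\dim\sigma) + \lfloor\deg(u^{-1})\rfloor + \lfloor\deg(u)\rfloor = \nu$. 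The polarization compatibility reduces to a parity check: $Q_{\Sigma_P(\sigma)}$ is $(-1)^{n-\dim\sigma}$-symmetric and the form on $A_{pq}$ is $(-1)^{p+q}$-symmetric by construction, and the product is $(-1)^\nu$ as required.

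The hardest step will be justifying the $\mathbb{Q}$-rational part of (2b): namely, that the natural $\mathbb{Q}$-structure on $H$ restricts compatibly to $J_f^{\mathcal{N}}(u) \oplus J_f^{\mathcal{N}}(u^{-1})$ and matches the $\mathbb{Q}$-structure on $H(\boldsymbol{\Sigma}_P(\sigma)) \otimes A_{\lfloor\deg(u^{-1})\rfloor, \lfloor\deg(u)\rfloor}$. This is not visible from the purely algebraic constructions of the present paper; it requires input from Sabbah's Betti realization, specifically the fact that the monodromy acts on $H(u)$ by the eigenvalue $e^{-2\pi\mathrm{i}\{\deg(u)\}}$, so that complex conjugation of the rational structure interchanges $H(u)$ with $H(u^{-1})$ (whether these live in different $H_\alpha, H_{1-\alpha}$ or both sit inside $H_0$). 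Once this involution is identified with the $z \leftrightarrow \overline{z}$ symmetry of $A_{pq}$, the remaining verification that the isomorphism is $\mathbb{Q}$-rational and polarization-preserving is a routine sign computation.
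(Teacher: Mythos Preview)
Your approach for parts (1), (2a), and the filtration computations in (2b) is essentially the same as the paper's: both reduce everything to the chain of isomorphisms in Corollary \ref{lemma:van-jac-rel-u} and Proposition \ref{prop:jac-SR-rel}, then track the index shifts exactly as you describe. The paper's proof of (2b) writes down the explicit map $(f,g)\mapsto \varphi_u(f)\otimes z + \varphi_{u^{-1}}(g)\otimes \bar z$ and checks that it matches the Hodge and weight filtrations, nothing more.

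Your final paragraph, however, misreads what (2b) asserts. The statement is that the triple $\bigl(J_f^{\mathcal N}(u)\oplus J_f^{\mathcal N}(u^{-1}),F^{\bullet},W_{\bullet}\bigr)$ \emph{underlies} a PMHS isomorphic to $H(\boldsymbol{\Sigma}_P(\sigma))\otimes A_{\lfloor\deg(u^{-1})\rfloor,\lfloor\deg(u)\rfloor}$. The $\mathbb{Q}$-structure and polarization are taken to be those of the right-hand side, transported across the complex-linear isomorphism; there is no claim that this $\mathbb{Q}$-structure agrees with the one inherited from the Betti realization of $H$. Indeed, the paper says explicitly in the Remark immediately following the proposition that one obtains two PMHS on $\operatorname{Gr}^{\mathcal N}J_f$ (one via $H$, one via the combinatorial construction of part (2)), that they share the same $F^{\bullet}$ and $W_{\bullet}$, and that \emph{it is not known whether they have the same $\mathbb{Q}$-structure}. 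So the ``hardest step'' you propose---invoking Sabbah's Betti realization and monodromy eigenvalues to match the rational structures---is neither needed nor claimed, and attempting it would go beyond what the proposition actually states.
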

\begin{proof}
\begin{enumerate}[(1)]
\item
Under the isomorphisms in Corollary \ref{lemma:van-jac-rel-u}, we have
$$
F^p H(u)
\xrightarrow{\sim} 
\oplus_{i \leq \nu - p} \operatorname{Gr}_p^{\Phi} H(u) 
\xrightarrow{\sim} 
\oplus_{i \leq \nu - p} \left(J^{\mathcal{N}}_{f}(u)\right)_{i + \{\deg(u)\}} 
= 
F^{p} J^{\mathcal{N}}_{f}(u)
$$
and $W(N)_{\bullet} \xrightarrow{\sim} W(f)_{\bullet}$.

\item 
By Corollary \ref{lemma:van-jac-rel-u},
we have an isomorphism $\psi_u : J^{\mathcal{N}}_{f}(u) \xrightarrow{\sim} H_{\mathbb{C}}(\boldsymbol{\Sigma}_P(\sigma))$, such that 
$$\psi_u\left(J^{\mathcal{N}}_{f}(u)_{i + \deg(u)} \right) = H_{\mathbb{C}}^i(\boldsymbol{\Sigma}_P(\sigma)), 
\quad \psi_u\left(M(f)_i\right) = M(\deg_P)_i$$ 
for all $i$.
Hence
$$\begin{aligned}
\psi_u\left(F^pJ^{\mathcal{N}}_{f}(u)\right) 
&= \bigoplus_{i \leq \nu - p} \psi_u\left(\left(J^{\mathcal{N}}_{f}(u)\right)_{i + \{\deg(u)\}}\right)\\
&= \bigoplus_{i \leq \nu - p} H_{\mathbb{C}}^{i-\lfloor \deg(u) \rfloor}(\boldsymbol{\Sigma}_P(\sigma))\\
&= F^{p - \lfloor \deg(u^{-1}) \rfloor}H_{\mathbb{C}}(\boldsymbol{\Sigma}_P(\sigma))\\
\psi_u\left(W_pJ^{\mathcal{N}}_{f}(u)\right)
&= \psi_u\left(M(f)_{p - \nu}\right)
= M(\deg_P)_{p - \nu}
= W_{p - \tilde{\nu}}H_{\mathbb{C}}(\boldsymbol{\Sigma}_P(\sigma)).
\end{aligned}$$
where $\tilde{\nu} \defeq \dim \sigma + \nu - n = \lfloor \deg(u) \rfloor + \lfloor \deg(u^{-1}) \rfloor$.

\begin{enumerate}
\item 
In this case, we have $\deg(u^{-1}) = \deg(u)$. Hence
$$\begin{aligned}
\psi_u\left(F^pJ^{\mathcal{N}}_{f}(u)\right) 
&= \left(F^{\bullet}H_{\mathbb{C}}(\boldsymbol{\Sigma}_P(\sigma))\right)^{p-\lfloor \deg(u) \rfloor}\\
\psi_u\left(W_pJ^{\mathcal{N}}_{f}(u)\right)
&= \left(W_{\bullet}H_{\mathbb{C}}(\boldsymbol{\Sigma}_P(\sigma))\right)_{p - 2\lfloor \deg(u) \rfloor}
\end{aligned}$$

\item
Consider the isomorphism
$$\begin{aligned}
\psi_{u} : J^{\mathcal{N}}_{f}(u) \oplus J^{\mathcal{N}}_{f}(u^{-1}) &\to H_{\mathbb{C}}(\boldsymbol{\Sigma}_P(\sigma)) \otimes A_{\lfloor \deg(u) \rfloor, \lfloor \deg(u^{-1}) \rfloor}\\
(f, g) &\mapsto \varphi_u(f) \otimes z + \varphi_{u^{-1}}(f) \otimes \bar{z}.
\end{aligned}
$$
Then
$$\begin{aligned}
&\psi_u\left(F^pJ^{\mathcal{N}}_{f}(u) \oplus F^pJ^{\mathcal{N}}_{f}(u^{-1})\right) \\
= &F^{p - \lfloor \deg(u^{-1}) \rfloor}H_{\mathbb{C}}(\boldsymbol{\Sigma}_P(\sigma))
 \otimes z + F^{p - \lfloor \deg(u) \rfloor}H_{\mathbb{C}}(\boldsymbol{\Sigma}_P(\sigma)) \otimes \bar{z}\\
= &\sum_{i + j = p} \left(F^{i}H_{\mathbb{C}}(\boldsymbol{\Sigma}_P(\sigma)) \otimes F_{\lfloor \deg(u^{-1}) \rfloor}^j \mathbb{C} z + F^{i}H_{\mathbb{C}}(\boldsymbol{\Sigma}_P(\sigma)) \otimes \overline{F_{\lfloor \deg(u) \rfloor}^j \mathbb{C} z}\right)\\
= &\sum_{i + j = p} F^{i}H_{\mathbb{C}}(\boldsymbol{\Sigma}_P(\sigma)) \otimes F^j A_{\lfloor \deg(u^{-1}) \rfloor, \lfloor \deg(u) \rfloor},\\
&\psi_u\left(W_pJ^{\mathcal{N}}_{f}(u) \oplus W_pJ^{\mathcal{N}}_{f}(u^{-1})\right)\\
= &W_{p - \tilde{\nu}}H_{\mathbb{C}}(\boldsymbol{\Sigma}_P(\sigma)) \otimes z + W_{p - \tilde{\nu}}H_{\mathbb{C}}(\boldsymbol{\Sigma}_P(\sigma)) \otimes \bar{z}\\
= &\sum_{i + j = p} W_{i}H_{\mathbb{C}}(\boldsymbol{\Sigma}_P(\sigma)) \otimes W_j A_{\lfloor \deg(u^{-1}) \rfloor, \lfloor \deg(u) \rfloor}.
\end{aligned}$$
\end{enumerate}
\end{enumerate}
\end{proof}

\begin{remark}
By Proposition \ref{prop:PMHS-on-J}, we can construct two polarized mixed Hodge structure  with weight $n$ ($n-1$ respectively) on $\oplus_{\alpha \in \mathbb{Z}} \operatorname{Gr}^{\mathcal{N}}_{\alpha} J_{f}$ ($\oplus_{\alpha \notin \mathbb{Z}} \operatorname{Gr}^{\mathcal{N}}_{\alpha} J_{f}$ respectively):
\begin{enumerate}
\item By Proposition \ref{prop:PMHS-on-J} (1), we can use the isomorphism
$$
\operatorname{Gr}^{\mathcal{N}} J_{f} \cong \bigoplus_{u \in \operatorname{Box}(\boldsymbol{\Sigma}_P)} J_{f}^{\mathcal{N}}(u) \cong \bigoplus_{u \in \operatorname{Box}(\boldsymbol{\Sigma}_P)} H(u) \cong H
$$
to construct such a structure.

\item
By Proposition \ref{prop:PMHS-on-J} (2), we can also use the isomorphism
$$\begin{aligned}
&\operatorname{Gr}^{\mathcal{N}} J_{f} \\
\cong &\bigoplus_{u \in \operatorname{Box}(\boldsymbol{\Sigma}_P)} J_{f}^{\mathcal{N}}(u) \\
= &\bigoplus_{\substack{u \in \operatorname{Box}(\boldsymbol{\Sigma}_P)\\u = u^{-1}}} J_{f}^{\mathcal{N}}(u) \oplus \bigoplus_{\substack{\{u, u^{-1}\} \subset \operatorname{Box}(\boldsymbol{\Sigma}_P)\\u \neq u^{-1}}} \left(J_{f}^{\mathcal{N}}(u) \oplus J^{\mathcal{N}}_{f}(u^{-1})\right)\\
\cong &
\bigoplus_{\substack{u \in \operatorname{Box}(\boldsymbol{\Sigma}_P)\\u = u^{-1}}} H(u)(\lfloor\deg(u)\rfloor) \oplus \bigoplus_{\substack{\{u, u^{-1}\} \subset \operatorname{Box}(\boldsymbol{\Sigma}_P)\\u \neq u^{-1}}} \left(H(\boldsymbol{\Sigma}_P(\sigma(u))) \otimes A_{\lfloor \deg(u^{-1}) \rfloor, \lfloor \deg(u) \rfloor}\right)
\end{aligned}$$
to construct such a structure.
\end{enumerate}
We know that they have the same Hodge filtration and weight filtration, hence the same Hodge diamond.
By we do not know whether they have the same $\mathbb{Q}$-structure.
\end{remark}

\begin{definition}
\label{def:diamond}
\begin{enumerate}[(a)]
\item 
For any $m \times n$-matrix 
$$
\boldsymbol{A} = \begin{pmatrix}
a_{11} & \cdots & a_{1n}\\
\vdots & \ddots & \vdots\\
a_{m1} & \cdots & a_{mn}\\
\end{pmatrix}
$$
define a Hodge diamond $\operatorname{HD}(\boldsymbol{A})$ of weight $m + n$ to be
$$
\operatorname{HD}(\boldsymbol{A}) \defeq
0
\begin{array}{c}
\iddots\\
\\
\ddots
\end{array}
\begin{array}{c}
0\\
\\
\\
\\
0
\end{array}
\begin{array}{c}
0\\
\\
\\
\\
\\
\\
0
\end{array}
\begin{array}{c}
a_{11}\\
\\
a_{21}\\
\\
\vdots\\
\\
a_{m-1,1}\\
\\
a_{m1}
\end{array}
\begin{array}{c}
0\\
\\
0\\
\\
\\
\\
\\
\\
0\\
\\
0
\end{array}
\begin{array}{c}
\iddots\\
\\
\\
a_{12}\\
\\
a_{22}\\
\\
\vdots\\
\\
a_{m-1,2}\\
\\
a_{m2}\\
\\
\\
\ddots\\
\end{array}
\begin{array}{c}
0\\
\\
\\
\\
\\
\cdots\\
\\
\cdots\\
\\
\cdots\\
\\
\cdots\\
\\
\cdots\\
\\
\\
\\
\\
0
\end{array}
\begin{array}{c}
\ddots\\
\\
\\
a_{1,n-1}\\
\\
a_{2,n-1}\\
\\
\vdots\\
\\
a_{m-1,n-1}\\
\\
a_{m,n-1}\\
\\
\\
\iddots\\
\end{array}
\begin{array}{c}
0\\
\\
0\\
\\
\\
\\
\\
\\
0\\
\\
0
\end{array}
\begin{array}{c}
a_{1n}\\
\\
a_{2n}\\
\\
\vdots\\
\\
a_{m-1,n}\\
\\
a_{mn}
\end{array}
\begin{array}{c}
0\\
\\
\\
\\
\\
\\
0
\end{array}
\begin{array}{c}
0\\
\\
\\
\\
0
\end{array}
\begin{array}{c}
\ddots\\
\\
\iddots
\end{array}
0
$$

\item
For any face $\sigma \prec P$, let
$$
n(\sigma, \alpha) \defeq |\{u \in \operatorname{Box}(\sigma), \deg_P(u) = \alpha\}|. 
$$
Let $\left(h_0(\Sigma_P(\sigma)), h_1(\Sigma_P(\sigma)), \cdots, h_{\operatorname{codim} \sigma}(\Sigma_P(\sigma))\right)$ be the $h$-vector defined in Definition \ref{def:h-vector} and let $\boldsymbol{A}_{\alpha}(\sigma)$ be the matrix
$$
\begin{pmatrix}
h_0(\Sigma_P(\sigma))\\
h_1(\Sigma_P(\sigma))\\
\vdots\\
h_{\operatorname{codim} \sigma}(\Sigma_P(\sigma))\\
\end{pmatrix}
\begin{pmatrix}
n(\sigma, 0) & n(\sigma, 1) & \cdots & n(\sigma, n)
\end{pmatrix}
$$
when $\alpha = 0$, and 
$$
\begin{pmatrix}
h_0(\Sigma_P(\sigma))\\
h_1(\Sigma_P(\sigma))\\
\vdots\\
h_{\operatorname{codim} \sigma}(\Sigma_P(\sigma))\\
\end{pmatrix}
\begin{pmatrix}
n(\sigma, \alpha) & n(\sigma, \alpha + 1) & \cdots & n(\sigma, \alpha + n - 1).
\end{pmatrix}
$$
when $0<\alpha<1$.
Let
$$\begin{aligned}
&\operatorname{HD}_{\alpha}(\sigma) \defeq \operatorname{HD}(\boldsymbol{A}_{\alpha}(\sigma)),
&&\operatorname{HD}_{\neq 0}(\sigma) \defeq \sum_{0<\alpha<1} \operatorname{HD}_{\alpha}(\sigma),\\
&\operatorname{HD}_0 \defeq \sum_{\sigma} \operatorname{HD}_0(\sigma),
&&\operatorname{HD}_{\neq 0} \defeq \sum_{\sigma} \operatorname{HD}_{\neq 0}(\sigma).
\end{aligned}$$
\end{enumerate}
\end{definition}

\begin{corollary}\label{cor:PMHS-van-cyc}
\begin{enumerate}
\item 
The Hodge diamonds of both $H_0$ and $\oplus_{\alpha \in \mathbb{Z}} \operatorname{Gr}^{\mathcal{N}}_{\alpha} J_{f}$ are $\operatorname{HD}_{0}$.
\item 
The Hodge diamonds of both $H_{\neq 0}$ and $\oplus_{\alpha \notin \mathbb{Z}} \operatorname{Gr}^{\mathcal{N}}_{\alpha} J_{f}$ are $\operatorname{HD}_{\neq 0}$.
\end{enumerate}
\end{corollary}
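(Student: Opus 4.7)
The plan is to read off the Hodge diamonds of $H$ and of $\bigoplus_\alpha \operatorname{Gr}^{\mathcal{N}}_\alpha J_f$ from the $u$-decompositions (\ref{align:decomposition-H}) and (\ref{align:decomposition-jac}), applying Proposition \ref{prop:PMHS-on-J} summand-by-summand, and then to repackage the result into the matrix form of Definition \ref{def:diamond}. Since Hodge diamonds are additive over direct sums of polarized mixed Hodge structures, the decomposition $H_0 = \bigoplus_{u:\,\deg(u)\in\mathbb{Z}} H(u)$ (and its non-integer analog) reduces the problem to computing the contribution of each piece. By Corollary \ref{lemma:van-jac-rel-u}(2) combined with Proposition \ref{prop:PMHS-on-J}(1), $J^{\mathcal{N}}_f(u) \cong H(u)$ as filtered vector spaces with both $F^\bullet$ and $W_\bullet$ preserved, so the two assertions (about $H$ and about $\bigoplus_\alpha \operatorname{Gr}^{\mathcal{N}}_\alpha J_f$) reduce to a single calculation.

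For each $u$, set $\sigma = \sigma(u)$ and $m = \operatorname{codim}\sigma$. By Proposition \ref{prop:PMHS-on-J}(2), the polarized mixed Hodge structure on $H(u)$ (when $u = u^{-1}$) or on the pair $H(u) \oplus H(u^{-1})$ (when $u \neq u^{-1}$) is, up to the shifts dictated by $\lfloor\deg(u)\rfloor$ and $\lfloor\deg(u^{-1})\rfloor$, either a Tate twist of $H(\boldsymbol{\Sigma}_P(\sigma))$ or the tensor product $H(\boldsymbol{\Sigma}_P(\sigma)) \otimes A_{\lfloor\deg(u^{-1})\rfloor,\lfloor\deg(u)\rfloor}$. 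By Theorem \ref{thm:PMHS-SR-ring} and Corollary \ref{lemma:star-quotient-isom}, $H(\boldsymbol{\Sigma}_P(\sigma))$ is of Hodge--Tate type of weight $m$ with Hodge numbers $h^{i,i} = h_i(\Sigma_P(\sigma))$, while $A_{p_0,q_0}$ contributes a single Hodge number at each of $(p_0,q_0)$ and $(q_0,p_0)$. Tracing through, the Hodge number $h_i(\Sigma_P(\sigma))$ is placed at the $(p,q)$-position determined by $i$, $\lfloor\deg(u)\rfloor$ and $\lfloor\deg(u^{-1})\rfloor$, and also at its mirror across the diagonal in the pair case. Since $\deg(u) + \deg(u^{-1}) = \dim\sigma$, the fractional parts of $\deg(u)$ and $\deg(u^{-1})$ are either both $0$ or both in $(0,1)$, so the integer vs.\ non-integer separation respects pair-formation.

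Finally, group the box elements by cone $\sigma$ and by the value $j = \deg(u)$. There are precisely $n(\sigma, j)$ elements with $\deg(u) = j$, and since $\lfloor\deg(u)\rfloor$ and $\lfloor\deg(u^{-1})\rfloor = \lfloor\dim\sigma - j\rfloor$ depend only on $j$, all of them contribute the same pattern of Hodge-number positions. Summing multiplies the entries by $n(\sigma, j)$, yielding a rank-one array with entries $h_i(\Sigma_P(\sigma)) \cdot n(\sigma, j)$ placed in the $(p,q)$-plane exactly as in the diamond picture for $\operatorname{HD}(\boldsymbol{A}_\alpha(\sigma))$ with $\alpha = \{j\}$. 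Summing then over all $\sigma \prec P$ and separating integer from non-integer $j$ produces $\operatorname{HD}_0$ for part (1) and $\operatorname{HD}_{\neq 0}$ for part (2). The main obstacle is the combinatorial verification that the position pattern arising from Proposition \ref{prop:PMHS-on-J}(2) indeed coincides with the diamond layout depicted in Definition \ref{def:diamond}; in particular, one must check that the self-inverse singletons ($u=u^{-1}$) and the two symmetric slants of the paired contributions ($u\neq u^{-1}$) fit uniformly into the same rank-one matrix picture once collated by $\sigma$ and $\deg(u)$.
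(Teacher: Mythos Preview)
Your proposal is correct and is precisely the derivation the paper has in mind: the corollary is stated without proof, as it is meant to follow immediately from the $u$-decompositions (\ref{align:decomposition-H}), (\ref{align:decomposition-jac}) together with Proposition \ref{prop:PMHS-on-J} and the combinatorial bookkeeping encoded in Definition \ref{def:diamond}. The only part you flag as an obstacle---matching the $(p,q)$-positions coming from the Tate twist (case $u=u^{-1}$) and from the tensor factor $A_{\lfloor\deg(u^{-1})\rfloor,\lfloor\deg(u)\rfloor}$ (case $u\neq u^{-1}$) with the diamond layout $\operatorname{HD}(\boldsymbol{A}_\alpha(\sigma))$---is indeed just bookkeeping: the $j$-th column of $\boldsymbol{A}_\alpha(\sigma)$ records the contribution of all $u\in\operatorname{Box}(\sigma)$ with $\deg(u)=\alpha+j$ (resp.\ $\deg(u)=j$ when $\alpha=0$), and the diagonal offset of that column in the diamond is exactly $\lfloor\deg(u)\rfloor-\lfloor\deg(u^{-1})\rfloor$, which is symmetric under $u\leftrightarrow u^{-1}$ and vanishes on the self-inverse elements.
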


\begin{remark}
\label{rmk:sub-diagram deformation}
For any sub-diagram deformation $f^{\prime}$ of $f$, we have an isomorphism $\operatorname{Gr}^{\mathcal{N}} J_{f^{\prime}} \cong \operatorname{Gr}^{\mathcal{N}} J_{f}$ and the vanishing cycles of $f$ and $f'$ are also isomorphic to each other.
Hence Corollary \ref{cor:PMHS-van-cyc} holds for any sub-diagram deformation of $f_{P, \mathbf{a}}$.

Moreover, for general non-degenerate $f$, $\dim \operatorname{Gr}^{W}_pH$ and $\dim \operatorname{Gr}_{F}^pH$ depends only on $P$ for any $p$.  (See \cite{sabbah2018some}.)
Hence we can use Corollary \ref{cor:PMHS-van-cyc} to compute them.
\end{remark}

\backmatter



\bmhead{Acknowledgements}

I want to thank Professor Lei Fu for guiding and supporting me throughout this study. 
His expertise and encouragement were crucial to its successful completion.

\section*{Declarations}

The corresponding author states that there is no conflict of interest.
All data included in this study are available upon request by contact with the corresponding author.


\bibliography{refs}

\end{document}